\newcommand{\E}{{\mathbf E}}
\newcommand{\F}{{\mathcal F}}
\newcommand{\LL}{\mathbf {L}}
\newcommand{\Lo}{\mathcal {L}}
\newcommand{\R}{{\mathbf R}}
\def\paral{/\kern-0.55ex/}
\def\parals_#1{/\kern-0.55ex/_{\!#1}}
\def\n#1{|\kern-0.24em|\kern-0.24em|#1|\kern-0.24em|\kern-0.24em|}
\def\eg{\textit{e.g. }}
\def\ie{\textit{i.e. }}
\def\div{\mathop{\rm div}}
\def\s.t.{\mathop {\rm s.t.}}
\newtheorem{theorem}{Theorem}[section]
\newtheorem{lemma}[theorem]{Lemma}
\theoremstyle{definition}
\theoremstyle{remark}
\newtheorem{remark}[theorem]{Remark}
\numberwithin{equation}{section}
\begin{document}

\title{An averaging principle for a completely integrable stochastic Hamiltonian system}
\author{Xue-Mei Li}

\address{Department of Mathematics,
University of Warwick,
Coventry CV4 7AL, U.K.} 
\thanks{Research benefited from a Royal Society Leverhulme Trust Senior Research Fellowship}
\subjclass[2000]{Primary 60H10, 58J65; Secondary 58J37}


\keywords{Diffusion processes, symplectic manifold, integrable family of Hamiltonians, perturbation, averaging principle}

\begin{abstract}
 We investigate the effective behaviour of a small transversal perturbation of order $\epsilon$ to a completely integrable stochastic Hamiltonian system, by which we mean a stochastic differential equation whose diffusion vector fields are formed from a completely integrable family of Hamiltonian functions $H_i, i=1,\dots n$. An averaging principle is shown to hold and the action component of the solution converges, as $\epsilon \to 0$, to the solution of a deterministic system of differential equations when the time is rescaled at $1/\epsilon$. An estimate for the rate of the convergence is given. In the case when the perturbation is a Hamiltonian vector field, the limiting deterministic system is constant in which case we show that the action component of the solution scaled at $1/\epsilon^2$ converges  to that of a limiting stochastic differentiable equation.
\end{abstract}
\maketitle

\section{Introduction}

{\it The Model.} A smooth $2n$-dimensional manifold $M$ is said to be a symplectic manifold if it is equipped with a symplectic structure,  that is,  a closed differential two-form $\omega $ which is nondegenerate in the sense that for each $x\in M,$ $\omega(v,w)=0$ for all $w\in T_xM$ implies $v=0$.  Equivalently $M$ admits a set of coordinates mapping such that the coordinate changing maps are symplectic on $\R^{2n}$ with the standard symplectic form $\omega_0=\sum dp_i\wedge d q_i$.

A family of $n$ smooth Hamiltonians $\{H_k\}$ on a $2n$ dimensional symplectic manifold is said to form a (completely) integrable system if they are pointwise Poisson commuting and if the corresponding Hamiltonian vector fields $X_{H_k}$ are linearly independent at almost all points.  Given such an integrable family and a $C^1$ locally  Hamiltonian vector field $V$ commuting with the family of vector fields $X_{H_k}$ in the sense of  $\omega(X_{H_k}, V)=0$,
consider the following model, which we call a {\it completely integrable stochastic symplectic/Hamiltonian system}:
\begin{equation}
\label{1}
dx_t=\sum_{k=1}^n X_{H_k}(x_t)\circ dB_t^k+V(x_t)dt.
\end{equation}
Here $(B_t^k, k=1,\dots, n)$ are pairwise independent Brownian motions on a filtered probability space $(\Omega, \F, \F_t, P)$ with the standard assumptions on the filtration and $\circ$ stands for Stratonovitch integration. 
We have suppressed the chance element $\omega$ here as is conventional. Note that the customary symbol for the symplectic form is unfortunately the same as that for the chance variable, however confusion should not arise as the chance variable will from now on  not be explicitly expressed unless  indicated otherwise. We call respectively $X_{H_k}$ the diffusion vector fields and $V$ the drift vector field for the stochastic differential equation.  

 In the integrable stochastic Hamiltonian system case, the diffusion vector fields span a sub-bundle of the tangent bundle, at least locally. The purpose of the present article is to investigate the effect of a small  perturbation to random systems of this type.  A solution to an integrable Hamiltonian system preserves the energies $H_k$, just as does a solution to any other stochastic Hamiltonian system and there are corresponding invariant manifolds (level sets). The Markov solution restricts to each compact level set and the restriction has generator  $$\Lo_0=\sum_{k=1}^n {1\over 2} \LL_{X_{H_k}} \LL_{X_{H_k}}+\LL_V.$$  
Here $\LL_V$ indicates Lie differentiation in the direction of $V$.
 If the integrable stochastic Hamiltonian system is perturbed by a vector field $\epsilon K$ for $\epsilon>0$ and $K$ a $C^1$ vector field not necessarily taking values in the span of $\{X_{H_k}, k=1,2\dots n\}$ the solution to the resulting equation
\begin{equa}
\label{sde2}
dy_t^\epsilon &= \sum_{i=1}^n X_{H_k}(y_t^\epsilon)\circ dB_t^k+V(y_t^\epsilon)dt+\epsilon K(y_t^\epsilon)dt,\\
y_t^\epsilon &= y_0
\end{equa}
will not conserve the energies.  On the other hand letting $\epsilon\to 0$, the deviation from level sets of the energies will be small.  Consider the solution $y^\epsilon(t/\epsilon)$  scaled in time by $1/\epsilon$, which has generator given by
${1\over \epsilon} \Lo_0+ \LL_K$. Note that the motion splits into two parts with the fast component an elliptic diffusion on the invariant torus and the slow motion governed by the transversal part of the vector field $ K$. The evolution of $y^\epsilon(t/\epsilon)$ is the skew product of the diffusion of order $1$ across the level sets and the fast elliptic diffusion of order $\epsilon^{-1}$ along the level sets. The motion on the level sets (thinking of the level sets as the standard n-torus),  which would be quasi-periodic if there were no diffusion terms,  is ergodic. The evolution of the action component of $\,y^\epsilon(t/\epsilon)$ will not depend on the angular variable in the limit as $\epsilon \to 0$ and is shown to be described by a system of $n$ ordinary differential equations whose right hand sides can be deduced from $\omega(K, X_{H_i})$, $i=1,\dots n$. Here $\omega$ is the symplectic 2-form. The convergence rate is shown to be of order $\epsilon^{1\over 4}$.

Furthermore if the vector field $K$ is given by a Hamiltonian function, the average of $\omega(K, X_{H_i})$ over the torus vanishes and we look at the second order scaling to see an interesting limit. The action component of $y^\epsilon(t\epsilon^{-2})$ involves a martingale term in the limit and
 the asymptotic law of $y^\epsilon(t\epsilon^{-2})$ across the level sets is shown to be given by a stochastic differential equation. It remains open to find an estimate for the rate of the convergence of the law of $y^\epsilon(t\epsilon^{-2})$ to the law of the limiting diffusion.

\medskip

{ \it Main Results. }
 Suppose that $\omega(V, X_{H_i})=0$ and $V$ commutes with each vector field $X_{H_i}$.  Let $y_0\in M$ be a regular point of $H$ with a neighbourhood $U_0$ the domain of an action-angle coordinate map.
 Let $T^\epsilon$ be the first time that the solution $y_{t\over\epsilon}$ starting from $y_0$ exits  $U_0$. 
  Set $H^\epsilon(t)=( H_1(y_{t/\epsilon}^\epsilon), \dots, H_n(y_{t/\epsilon}^\epsilon)).$  Then $H^\epsilon$ converges to the solution
of the following system of deterministic equations
$${d\over dt} \bar H_i(t)=\int_{M_{\bar H(t)} } \omega(X_{H_i}, K) (\bar H(t), z) \; d\mu_{\bar H_t}(z),$$
with corresponding initial condition:
If  $T^0$ is the first time that $\bar H(t)$ exits from $U_0$ then for all $t<T_0$,  $\beta>1$,
there exists a constant $C_2>0$ such that 
$$\left(\E(\sup_{s\le t} \|H^\epsilon(s\wedge T^\epsilon)-\bar H(s\wedge T^\epsilon)\|^\beta)\right)^{1\over \beta}\le 
C_2\epsilon^{1/4}.$$
Furthermore if  $r>0$ is such that  $U\equiv \{x: \|H(x)-H(y_0)\|\le r\}\subset U_0$  define
 $$T_\delta=\inf_t \{|\bar H_t- H(y_0)|\ge r-\delta\}.$$
 Then  for any $\beta>1$,   $\delta>0$ and a constant $C$ depending on $T_\delta$,
 $$P\left(T^\epsilon<T_\delta\right)\le C(T_\delta)\delta^{-\beta} \epsilon^{\beta/4}.$$

In the case of $K$ being a smooth local Hamiltonian vector field  \\  $\int_{M_{\bar H(t)} } \omega(X_{H_i}, K) (\bar H(t), z) \; d\mu_{\bar H_t}(z)=0$ and hence we could look at the second scaling. The law of the stochastic process $H(y^\epsilon_{t\over \epsilon^2})$ stopped at $S^\epsilon$, the first time that the process $y^\epsilon_{t\over \epsilon^2}$ exits from $U_0$, converges to that of $H(z_{t\wedge S^\epsilon})$ where $z_t$ is the solution to the following stochastic differential equation
$$dz_t^j=\sum_i \sigma^j_i(z_t)\circ dB_t^i+b_j(z_t)dt.$$
Here $(\sigma_i^j)$ is the square root of the matrices $(a_{ij})$,
\begin{eqnarray*}
a_{ij}(a)&=& -\int_{M_a} \omega(K,X_{H_j})\Lo_0^{-1}\Big(\omega(K, X_{H_i})\Big)(a,z) \; d\mu_a(z),
\end{eqnarray*}
and
$$b_j(a)={1\over 2}\int_{M_a} \LL_K\Lo_0^{-1}( \omega(X_{H_j}, K)) (a, z)  \, d\mu_a(z).$$

We give here a  somewhat trivial example of a stochastic integrable system of equations on $\R^{4}$ with the standard symplectic structure:

\begin{eqnarray*}
dx_1(t)&=&x_3(t)d B_t\\
dx_2(t)&=&x_4(t)dB_t+x_4 dW_t\\
dx_3(t)&=&-x_1(t)dB_t\\
dx_4(t)&=&-x_2(t)dB_t-x_2(t)dW_t
\end{eqnarray*}
where $B_t$ and $W_t$ are independent 1-dimenaional Brownian motions.
Take the perturbation vector to be   $ K_1=(0 ,{x_2\over x_2^2+x_4^2},0,0) $,  $K_2=({x_3\over (x_1^2+x_3^2)^2} ,0,{x_1\over (x_1^2+x_3^2)^2},0)$ or \\$K_3=({x_3^2\over (x_1^2+x_3^2)^{3\over 2}}, 0, -{x_1 \over (x_1^2+x_3^2)^{3\over 2}}, 0)$. In the case of $K_1$ we have a non-trivial average; in the case of $K_2$ and $K_3$  we have  trivial averages at first scaling and can proceed to the second scaling and obtain a SDE in the limit.
\medskip

{\it Remark On The Model .}  
This work is in the framework of Arnold on averaging principle of integrable Hamiltonian system as a stochastic Hamiltonian system can be considered as a family of ordinary differential equations with time dependent random vector fields (whose corresponding Hamiltonians are in general neither bounded from below nor differentiable in time).  Averaging of stochastic systems was pioneered by Khasminskii \cite{Khasminskii}, Papanicolaou, Stroock and Varadhan \cite{Papanicolaou-Stroock-Varadhan}, 
and has  been an active research field  on which there is a rich literature. The structure of the main averaging results are close to that described in the excellent survey of Papanicolaou \cite{Papanicolaou}.  
We refer to Abraham-Marsden \cite{Abraham-Marsden78}, Arnold \cite{Arnold-mechanics}, Hofer-Zehnder \cite{Hofer-Zehnder} and McDuff-Salamon \cite{McDuff-Salamon} as references for Hamiltonian systems on sympletic manifolds, 
to Givon-Kupferman-Stuart \cite{Givon-Kupferman-stuart} for some physical models behind these problems and for recent progress in the direction of deterministic averaging, to Freidlin-Wentzell \cite{Freidlin-Wentzell93} and  Sowers-Namachchivaya \cite{Srinamachchivaya-Sowers}  for random perturbations to  systems with one degree of freedom,   and to  Eizenberg-Freidlin \cite{Eizenberg-Freidlin93},   Borodin-Freidlin \cite{Borodin-Freidlin},
Freidlin-Wentzell\cite{Freidlin-Wentzell-book}, Sowers  \cite{Sowers02},  Koralov \cite{Koralov04}, Khasminskii-Krylov \cite{Khasminskii-Krylov}, and Khasminskii-Yin \cite{Khasminskii-Yin}  for recent related work on perturbations of stochastic systems as well as Arnold-Imkeller-Namachchivaya \cite{Arnold-Imkeller-Namachchivaya04} for a discussion on asymptotic expansion of a damped oscillator of one degree of freedom with small noise perturbation. For the Lagrangian mechanics and variational principle in stochastic framework we would like to refer to Bismut's work \cite{Bismut-mechanics}. However in this article we do not investigate  the stochastic mechanics related to the SDEs.

The main novelty of this work is the model itself. We also obtained a rate of convergence.  The generator of our perturbed system is:
$$\Lo_0^\epsilon=\sum_{k=1}^n {1\over 2} \LL_{X_{H_k}} \LL_{X_{H_k}}+\LL_V+\epsilon L_K,$$ 
from which we observe the following  aspects of the model:  a) 
The unperturbed random dynamical system is a completely integrable system.  b)  The fast component of the system is the diffusion term, not the deterministic term.  It is also worth noting that the limiting slow motion scaled at $1/\epsilon$  has $n$ degrees of freedom and is described by a system of $n$ ordinary differential equations.  It is only at the second scaling, in the case of the perturbation being Hamiltonian, that we see a limiting $n$-dimensional non-trivial Markov process. The convergence of the slow motion $(H_1(y^\epsilon_{t\over \epsilon}), \dots, H_n(y^\epsilon_{t\over \epsilon}))$ is in $L^p$ with rate $\epsilon^{1\over 3}$. At the second scaling the slow motion converges weakly.  The assumptions we make on the Hamiltonian functions are:   the $\R^n$ valued function $(H_1,\dots, H_n)$ is proper and its set of critical points has measure zero. 

 The following work relates particularly well to ours.
  We'll point out the differences and similarities.
 In Dolgopyat \cite{Dolgopyat} the following is proved:  If $\hat h$ is a first integral of   $\dot y=\E (F(y, \xi_1))$, where $\xi_1$ is a random variable of compact support, the piece-wise linear function $h_\epsilon$ given by $\hat h_\epsilon(\epsilon^2 n)=\hat h(x_n^\epsilon)$ converges weakly to the solution of a SDE, under suitable conditions. This was proved using the martingale method.  Here $x_n^\epsilon$ is solution to
 $$x_{n+1}=x_n+\epsilon F(x_n, \xi_n)+\epsilon^2 G(x_n, \xi_n)+\epsilon^3 H(x_n,\xi_n, \epsilon),$$
and the $\xi_n$'s are i.i.d. random variables.    
The equation governing $x_n$ are very general.  If the perturbation $K$ in (\ref{sde2}) is given by a Hamiltonian function we may take $F\equiv 0$ in Dolgopyat's model and take $\hat h=h$, a first integral to (\ref{1});  however
 it is not clear how the piece-wise linear function $\hat h_\epsilon$ relates to the fast motion $h(x_{t/\epsilon^2})$.
In Eizenberg-Freidlin \cite{Eizenberg-Freidlin93} and Borodin-Freidlin \cite{Borodin-Freidlin},  the diffusion part of the motion belongs to the slow component and the fast motion is deterministic. More precisely the perturbed  generator is $L_V+\epsilon L_1+L_2$ for a vector field $V$ and an interaction term $L_2$.  Freidlin and Weber \cite{Freidlin-Weber01} \cite{Freidlin-Weber04} have a different objective. They are mainly concerned with one conserved quantity $H$, not a completely integrable system. The objective there is to obtain a limiting Markov process, using weak convergence, on the graph homeomorphic to the set of connected components of the level sets of $H$. 

\section{Preliminaries}
\subsection{Hamiltonian and Symplectic Vector fields} 
 Every symplectic manifold has a natural measure, called the Liouville measure. It is in fact $\,\wedge^n \omega$, differing from the volume form by a constant. Denote by $\iota _v\omega $  the inner
product of a tangent vector $v$ with $\omega $. The map from $TM$ to $T^{*}M$ given by $v\mapsto \iota _v\omega $ is a vector bundle isomorphism, and there is a one to one correspondence between vector fields and differential 1-forms. A {\it symplectic vector field} $V$, also called a {\it locally Hamiltonian vector field}, is one which preserves the symplectic structure, \ie $L_V\omega=0$. Here $L_V$ denotes Lie differentiation in the direction of $V$. Equivalently $\iota_V\omega$ is a closed differential 1-form.  For every $C^1$ function $H:M\to {\Bbb R}$ we can associate a {\it Hamiltonian vector field} (also called  {\it symplectic gradient vector field}) given by: 
$$\iota _{X_H}\omega =dH.$$
The canonical sympletic structure on $\R^{2n}$ with coordinates $(q_1,\dots, q_n, p_1,\dots, p_n)$ is $\omega=\sum_{i=1}^n dq_i\wedge dp_i$. Darboux's theorem asserts that any symplectic manifold is locally $\R^{2n}$ with its canonical sympletic structure. If the first de Rham cohomology ${\mathbb H}^1(M;\R)$ vanishes, as in the case of $\R^{2n}$, every locally Hamiltonian vector field is given by a Hamiltonian function. There are locally Hamiltonian vector fields which are not given by a Hamiltonian function. Take the two torus $T^2$ with coordinates $x$ and $y$ periodic in $x$ and $y$. The canonical sympletic structure on $\R^2$ induces the symplectic structure on $T^2$: $\omega=dx\wedge dy$.
A vector field $X(x,y)=a(x,y){\partial \over \partial x}+b(x,y){\partial \over \partial y}$ with ${\partial a\over \partial x} +{\partial b\over \partial y}\equiv0$ is clearly locally Hamiltonian: the 1-form $\iota_X\omega=a(x,y)dy-b(x,y) dx$ is closed. If $a=1, b=0$, the vector field is not given by a Hamiltonian function on $T^2$.

The space of smooth functions on $M$ has a Lie algebra structure given by the Poisson bracket. 
The Poisson bracket of two smooth functions is denoted by 
 $\{F_1, F_2\}$ and  
 $\{F_1,F_2\}=dF_1(X_{F_2})=\omega(X_{F_1}, X_{F_2})$.
The vector field corresponding to the Poisson bracket is precisely the Lie bracket of the Hamiltonian vector fields $X_{F_1}$ and $X_{F_2}$. Two Hamiltonian functions are {\it Poisson commuting} or {\it in involution} if their Poisson bracket vanishes, in which case their corresponding Hamiltonian flows commute. If $\{F, H\}=0$ we say that $F$ is a {\it first integral} of $H$.  Two Hamiltonian functions are said to be linearly independent at $x$ if their associated Hamiltonian vector fields are linearly independent at that point. A family of $n$ Hamiltonian functions is said to form an {\it  integrable system} if the Hamiltonian functions are pairwise Poisson commuting and if they are linearly independent on a set of full measure.

\subsection{An example of a stochastic Hamiltonian system on $\R^{2n}$}
\label{se:example}
The Hamiltonian vector field  given by an Hamiltonian function $H$ is given by $X_H=JdH$ where $J$ is the canonical complex structure: 
\[J=
\left(
\begin{array}{rl}
 0 &  {\mathbf 1}    \\
- {\mathbf 1} & 0,     
\end{array}
\right)
\]
where ${\mathbf 1}$ denotes the $n$ by $n$ identity matrix. The corresponding Hamiltonian system thus takes the familiar form
\begin{eqnarray*}
\dot q_i&=&{\partial H\over \partial p_i}, \qquad 1\le i\le n\\
\dot p_i&=&-{\partial H\over \partial q_i}, \qquad 1\le i
\le n.
\end{eqnarray*} 
For simplicity write $p=(p_1,\dots, p_n)$ and $q=(q_1,\dots, q_n)$. An important class of examples of Hamiltonian functions on $\R^{2n}$ is those of the form $H(p,q)={1\over 2}|p|^2+V(q)$ for some potential function $V$. If $V$ is quadratic, \eg $V(q)={1\over 2} a^2|q|^2$, we have the standard harmonic oscillator. The Poisson bracket in $\R^{2n}$ is of the following form:
$$\{H,F\}=\sum_{i=1}^n\left( {\partial H\over \partial p_i}{\partial F\over \partial q_i}-{\partial H\over \partial q_i}{\partial F\over \partial p_i}\right).$$ 
A example of an integrable stochastic Hamiltonian system is given by
\begin{eqnarray*}
d q_i(t)&=&{\partial K\over \partial p_i}dt+\sum_{k=1}^n{\partial H_k\over \partial p_i}\circ dB_t^k\\
d p_i(t)&=&-{\partial K\over \partial q_i}dt+\sum_{k=1}^n {\partial H_k\over \partial q_i}\circ dB_t^{k}.
\end{eqnarray*}
where  \begin{eqnarray*}
H_1&=&{1 \over 2}\sum_{i=1}^n a_i^2q_i^2+{1\over 2}\sum_{i=1}^n p_i^2\\
H_k&=&{1 \over 2} a_kq_k^2+{1\over 2} {p_k^2\over  a_k}, \qquad 2\le k<n,
\end{eqnarray*}
and $K$  is a smooth function which commutes with all $H_i$'s, \eg if $K$ is a smooth function of $H_i$'s.

\subsection{The invariant manifolds and integrable symplectic Hamiltonian systems} 
Let $\{H_k\}_{k=1}^n$ be an integrable family of smooth Hamiltonian functions, \ie they are Poisson commuting and so the $H_k$'s  are first integrals of each other and they are independent on a set of full measure. 
 For $a=(a_1,\dots, a_n)\in \R^n$ denote by $M_a$ the level set of the first integrals $\{H_k\}$:
$$M_a=\cap_{i=1}^n\{x: H_i(x)=a_i\}.$$
The Liouville-Arnold theorem states that if $\{H_k\}_{k=1}^n $ are independent on $M_a$ then $M_a$ is a smooth manifold and furthermore it is diffeomorphic to an $n$ dimensional torus if it is compact and connected. For such value $a$, $M_a$ is invariant under the flows of each $H_k$ and each $x$ in $M$ determines an invariant manifold through the value 
$a=(H_1(x), \dots, H_n(x))$, which we write also as $M_{H (x)}$.  

An application of It\^o's formula below shows that the solution flow $\{F_t(x): t\ge 0\}$ of (\ref{1}) preserves the invariant manifolds $\{M_a\}$:
\begin{eqnarray*}
dH_i(x_t)& = &\sum_kdH_i\Big( X_{H_k}(x_t)\Big)\circ dB_t^k+dH_i\big(V(x_t)\big)dt=0, \qquad 1\le i\le n.
\end{eqnarray*}

 For simplicity we  assume throughout the paper the following:
\begin{itemize}
\item   The invariant manifolds are compact,
\end{itemize}
 which is the case if the map $x\in M\mapsto \big(H_1(x),\dots H_n(x)\big)\in \R^n$ is proper.
 Note that  the $n$ vector fields $\{X_{H_i(x)}\}$ are tangent to $M_{H(x)}$ and the symplectic form $\omega$ vanishes on the invariant manifolds $M_a$.
Therefore the stochastic differential equation (\ref{1}) is elliptic when restricted to individual invariant manifolds and the Markovian solution is ergodic. Denote by $\mu_a$ the unique invariant probability measure on $M_a$; it can be considered as the uniform probability measure on the torus.

\subsection{The invariant measure and th divergence operator for semi-elliptic stochastic symplectic systems }
\label{se:measure}
Let $\{A^0, A^1, \dots, A^n\}$ be smooth symplectic vector fields with $[A^i, A^j]=0$ for all $i, j$. Assume that $\{A^1,\dots ,A^n\}$ spans a sub-bundle $E$ of the tangent bundle $TM$ of rank $n$.  Consider the following stochastic differential equation:
\begin{equation}\label{eq:symp}
dx_t=\sum_{i=1}^n A^i(x_t)\circ dB_t^i +A^0(x_t)dt.
\end{equation}
If there is a global solution flow $\{F_t(x_0, \omega): t\ge 0\}$  to equation (\ref{eq:symp}), then the solution flows are stochastic symplectomorphisms, \ie $\omega=F_t^*\omega$, where $\omega$ is the symplectic form, not the chance variable.

For each $x\in M$, define a linear map $A(x): \R^{2n}\to T_xM$ by
$$A(x)(e)=\sum_{i=1}^n A^i(x)\langle e, e^i\rangle, \qquad e\in \R^{2n}$$
where $\{e^i\}$ is an orthonormal basis of $\R^{2n}$. The linear map is onto $E_x$ with kernel $\{0\}\times \R^n$ and gives a positive symmetric bilinear form on $E$ by making $\{A^i(x)\}$ an orthonormal basis:
$$\langle A^i(x), A^j(x)\rangle=\delta_{ij}.$$
Then $A(x)$ is an isomorphism from $\R^n\times \{0\}$ to $E_x$. This defines a metric on $E$: for $u= \sum_iu_i A^i$ and $v=\sum_i v_i A^i$, $$\langle u, v \rangle=\sum_{i=1}^n u_iv_i$$
and for a function $f$ we define its gradient $\nabla_E f=\sum_i df(A^i)A^i$.
The symplectic structure $\omega$ restricts to $E$  defines a complex structure on $M$ as following: we first give the tangent bundle $TM$ any Riemannian metric which agrees with the one constructed on $E$ using the linear map $A$. Define
$J_x: T_xM \to T_xM$ by
$$\omega(J_xu,v)=\langle u, v\rangle_x.$$
To see that this identity defines $J_x$ uniquely suppose that 
for $u\in T_xM$ there are $u_1$ and $u_2$ satisfies
 $\omega(u_i,v)=\langle u_i, v\rangle_x$, $i=1,2$. Then $\omega(u_1-u_2, v)=0$ for all $v$. Thus $u_1=u_2$. Existence can be easily seen as direct calculations can be done in $\R^{2n}$.
 
 Next take $A^i=X_{H_i}$ in (\ref{eq:symp}), $i=1,2,\dots n$, to be the Hamiltonian vector 
 fields for an integrable family of Hamiltonian functions $\{H_i\}$ and $A^0=V$. We arrive back 
 to the integrable stochastic sympletic equation (\ref{1}) where $V$ is a symplectic vector 
 field commuting with all $X_{H_i}$'s. 
Under our assumption that $$H: x\to \big(H_1(x), \dots H_n(x)\big)$$ is a proper 
map,  then for almost every point $a_0$ in $\R^n$ it  is either trvial or a local trivial 
fibration in the sense that there is a neighbourhood $V$ of $a_0$ such that 
$H^{-1}(a)$ is a smooth sub-manifold for all $a\in V$ and that there is a 
diffeomorphism from $ H^{-1}(V)$ to $V\times  H^{-1}(a_0)$. Such $a_0$  is 
called a regular value of $H$. Denote by $\Sigma_{H}$ the set of values in 
$\R^n$ which are not regular. A point $y$ in $M$ is said to be a critical point if
$H(y)\in \Sigma_{H}$.  By Sard's theorem the set of critical values of 
the function $ H$ has measure zero. The $2n$-differential form $\omega^n$, as a 
measure, has a decomposition which gives a measure on each invariant manifold $M_a$ for regular 
$a$ value. The decomposition can be chosen in the following way. First recall that on a 
neighbourhood of a regular point $a_0$ of $H$, every component of the level set 
$M_{a_0}$ is diffeomorphic to an $n$-torus and a small neighbourhood $U_{0}$ of $M_{a_0}$ is 
diffeomorphic to the product space $T^n\times D$ where $D$ is a relatively compact open set in 
$\R^n$, see \eg \cite{Arnold-Kozlov-Neishtadt}. More precisely 
if $V$ is an open set of $\R^n$ such that $H^{-1}(V)$ does not contain any critical 
points of $H$ then it is diffeomorphic to $D\times T^n$.
Take an action angle chart around $M_a$ which is diffeomorphic to $D\times T^n$ for some open 
set $D$. The measure $(\sum_i dI^i\wedge d\theta^i)^n$
on the product space naturally splits to give us a probability measure,  the Haar measure 
$d\theta^1\wedge\dots d\theta^n$ on $T^n$. We take the corresponding one on $M_a$ and denote
it by $\mu_a$.
Let $U$ be a section of $E$. Define the divergences $\div_E^a U$  to be the functions such that
$$\int_{M_a} df(U)\;d\mu_a=-\int_{M_a} f{\div}_E^a U \;d\mu_a$$
for all  smooth functions $f$ on $M_a$.
Note that $\div_E X_{H_i}=0$, since 
$$\int_{M_a} df(X_{H_i}) d\mu_a=\int_{M_a} \{H_i, f\}d\mu_a=0$$ for all smooth functions $f$ (see the beginning of section \ref{se:perturbation} for a calculation). Thus if $U=\sum_i a_iX_{H_i}$
where $a_i$ are constant on $M_a$ and is thus divergence free.

\begin{remark}
 \label{le-0}
 Let $U$ be a  section of $E$ commuting with all $X_{H_i}$  the invariant measure for the SDE (\ref{1}) restricted to the invariant manifold $M_a$ is $\mu_a$, which varies smoothly with $a$ in sufficiently small neighbourhoods of  a regular value.
\end{remark}

\begin{proof}
The measure $\omega^n$ is an invariant measure for the SDE on $M$ due to the fact that the solution of the SDE leaves invariant the symplectic form. More precisely, since $U$ commutes with $\{X_{H_i}\}$ and thus can be written in the the form of $U=\sum_i a_iX_{H_i}(x)$, where $a_i$ are constant on $M_a$, it is therefore divergence free. Thus the invariant measure of the SDE restricted to the torus is the same as that of the corresponding SDE without a drift. From the action angle transformation we see that the measure $\mu_a$ is an invariant measure for the SDE restricted to $M_a$. This is in fact the only invariant measure for the SDE on $M_a$ since the system is elliptic when restricted to each level set and the conclusion follows.
\end{proof}

\section{An Averaging Principle}
\label{se:average1}
Let $\{H_i\}_{i=1}^n$ be a completely integrable system on a smooth $2n$-dimensional symplectic manifold $M$ so that the functions $\{H_i\}$ are constants of motions of each other and that they are pairwise in involution.    We assume that the $\R^n$-valued function $H=(H_1,\dots, H_n)$ is proper  and its set of critical points has measure zero. Note that the vector fields $\{X_{H_i}\}$ form an integrable distribution and  through each point of the manifold there is an integrable $n$ dimensional manifold. 
 

Take an action-angle coordinate: $\phi^{-1}: U_0\to D\times T^n$. In this coordinate,
 $x=\phi(I, \theta)$, $I\in D$, $\theta\in T^n$, and
 $(\phi^{-1})_*\omega=dI\wedge d \theta$ defines a symplectic structure on $ D\times T^n$. Furthermore if $\tilde H_i=H_i\big(\phi(I, \theta)\big)$ is the induced Hamiltonian on $D \times T^n$ then
 $\dot I_i^k=-{\partial \tilde H_i \over \partial \theta_k}=0$ and
 \begin{equation}
 \label{omega}
\dot \theta^k_i ={\partial \tilde H_i \over \partial I_k}=\omega^k_i(I)
\end{equation}
 with $\omega_i^k$ smooth functions. In fact $X_{\tilde H_i}=(\phi^{-1})_*(X_{H_i})=-\sum_{k=1}^n  {\partial (H_i\circ \phi)\over \partial I_k}{\partial \over \partial \theta_k}$.
For example the integrable Hamiltonian system in section \ref{se:example} is equivalent to the Hamiltonian system 
$H_1=\sum_{i=1}^n a_i \bar q_i$, $a_i>0$, and $H_k= \bar q_k, k=2,\dots, n$,
through the action angle coordinates change $(q,p)\mapsto (\bar q, \bar p)$:
\begin{eqnarray*}
&&\left( q_1, \dots  q_n,\ p_1,\dots,  p_n\right)\\
& = & \left(\sqrt{2  \bar q_1 \over a_1}\cos \bar p_1,\dots
\sqrt{2 \bar q_n \over a_n}\cos \bar p_n,\sqrt{2a_1 \bar q_1}\sin  \bar p_1, \dots,
\sqrt{2a_n \bar q_n}\sin \bar p_n,\right).
\end{eqnarray*}
The corresponding Hamiltonian system is the trivial one $\dot {\bar p_i}=a_i, \dot {\bar q_i}=0$.
Since $U_0$ is diffeomorphic to $D \times T^n$ there is a constant $r>0$ such that $U_0$ contains
the open set $\{x: \sum_i|H_i(x)-H_i(y_0)|^2\le r^2\}$.

 Let $K$ be a smooth vector field, transversal in the sense that 
 $\omega( X_{H_i}, K)$ are not all identically zero. Denote by $y_t^\epsilon$ the solution to (\ref{sde2}), the perturbation of the integrable system (\ref{1}) starting from a given point $y_0$ in $M$. Set
 $x_t=y_t^0$, the solution to (\ref{1}) with initial value $y_0$. If $V$ is a vector field on $M$ denote by $\tilde V$  the induced vector field on $D\times T^n$. We assume the following of the SDE (\ref{sde2}):\\
 
 \noindent 
{\bf Condition R:} Suppose that $\omega(V, X_{H_i})=0$ and $V$ commutes with all vector fields $X_{H_i}$.  Let $y_0\in M$ be a regular point of $H$ with a neighbourhood $U_0$ the domain of an action-angle coordinate map: $\phi^{-1}: U_0\to D\times T^n$, where $D$ is an open set of $\R^n$.

We adopt the following notation: if $f$ is a function on $U_0$, by $\tilde f$ we mean the representation of $f$ in $D\times T^n$.

\begin{lemma}
\label{lemma-1} Assume condition $R$ holds for (\ref{sde2}).
Let $\tau^\epsilon$ be the first time that the solution $y_t^\epsilon$ starting from $y_0$ exits  $U_0$.    Then for any smooth function $f$  on $M$, 
\begin{itemize}
\item[(1)] 
$$\left[\E\left(\sup_{s\le t\wedge \tau^\epsilon}|f(y_s^\epsilon)-f(x_s)|^p\right)\right]^{1\over p}
\le C_1 \epsilon (t+ t^2),$$
where $C_1=C_1(V, K, H_i,f)$ depends on the upper bounds of the functions 
$|d \tilde f|$, $|{\partial^2 \tilde H_k\over \partial I_i\partial I_j}|$, $|d \tilde V|$, $| \tilde K|$ on $D\times T^n$.
\item[(2)] If $V\equiv 0$, then the estimates above, $\epsilon (t+ t^2)$, can be improved to   $C_1 \epsilon (t+ t^{3\over 2})$.
\end{itemize}
\end{lemma}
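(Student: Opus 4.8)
\emph{Plan.} The perturbed and unperturbed equations are driven by the same Brownian motions and differ only by the drift $\epsilon K\,dt$, so this is a drift-perturbation estimate, which I would carry out in the action-angle chart $\phi^{-1}:U_0\to D\times T^n$, lifting the angle to the universal cover $\R^n$ so that differences of angle variables are well defined. Writing $\tilde x_t=\phi^{-1}(x_t)=(I_t,\theta_t)$ and $\tilde y_t^\epsilon=(I_t^\epsilon,\theta_t^\epsilon)$, I first record the two coordinate systems. Using $X_{\tilde H_k}=\sum_j b_k^j(I)\,\partial/\partial\theta_j$ with $b_k^j=-\partial\tilde H_k/\partial I_j$ depending on $I$ only, the commutativity $[X_{H_k},X_{H_l}]=0$, and Condition R (which gives $VH_i=-\omega(V,X_{H_i})=0$, so $V$ is tangent to the level tori $\{I=\text{const}\}$ and $\tilde V$ has no action component), two structural facts follow: the Stratonovich corrections vanish, since $X_{H_k}b_l^j=0$ and $X_{H_k}I_i=0$, so the Itô and Stratonovich forms coincide; and the action of $y^\epsilon$ moves only through the perturbation. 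Writing $\tilde K=\tilde K_I\,\partial_I+\tilde K_\theta\,\partial_\theta$,
$$dI_t^{\epsilon}=\epsilon\,\tilde K_I(\tilde y_t^\epsilon)\,dt,\qquad d\theta_t^{\epsilon,j}=\sum_k b_k^j(I_t^\epsilon)\,dB_t^k+\tilde V^j(\tilde y_t^\epsilon)\,dt+\epsilon\,\tilde K_\theta^j(\tilde y_t^\epsilon)\,dt,$$
while $I_t\equiv I_0$ and $d\theta_t^j=\sum_k b_k^j(I_0)\,dB_t^k+\tilde V^j(I_0,\theta_t)\,dt$. Note that $x_t$ stays on the torus $\{I_0\}\times T^n\subset U_0$ for all time and $y^\epsilon$ stays in $U_0$ up to $\tau^\epsilon$, so all the coefficient bounds entering $C_1$ hold on $[0,t\wedge\tau^\epsilon]$.

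\emph{Action estimate.} From the first equation, equivalently from $dH_i(y^\epsilon_t)=\epsilon\,\omega(X_{H_i},K)(y^\epsilon_t)\,dt$ (the diffusion and $V$ contributions vanishing exactly), I obtain the pathwise bound
$$\sup_{s\le t\wedge\tau^\epsilon}|I_s^\epsilon-I_0|\le \epsilon\, t\,\|\tilde K\|_\infty,$$
which is already of the clean order $\epsilon t$, with no higher power of $t$; this controls the action coordinate of $f$.

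\emph{Angle estimate.} Subtracting the two $\theta$-equations,
$$\theta_t^{\epsilon,j}-\theta_t^j=\sum_k\int_0^t\big(b_k^j(I_s^\epsilon)-b_k^j(I_0)\big)dB_s^k+\int_0^t\big(\tilde V^j(\tilde y_s^\epsilon)-\tilde V^j(I_0,\theta_s)\big)ds+\epsilon\int_0^t\tilde K_\theta^j(\tilde y_s^\epsilon)\,ds.$$
The crucial point is that $b_k^j$ depends on $I$ only, so by the mean value theorem the martingale integrand is bounded by $\|\mathrm{Hess}\,\tilde H\|_\infty\,|I_s^\epsilon-I_0|\le C\epsilon s$. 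When $V\equiv 0$ the drift-difference term is absent and there is no feedback, so Burkholder--Davis--Gundy gives
$$\Big(\E\sup_{s\le t\wedge\tau^\epsilon}\Big|\sum_k\int_0^s(b_k^j(I_u^\epsilon)-b_k^j(I_0))dB_u^k\Big|^p\Big)^{1/p}\le C_p\Big(\int_0^t(C\epsilon u)^2du\Big)^{1/2}\le C'\epsilon\,t^{3/2},$$
and together with the $\epsilon t\|\tilde K\|_\infty$ bound on the last drift this yields $\|\sup_{s}|\theta_s^\epsilon-\theta_s|\|_p\le C\epsilon(t+t^{3/2})$, proving (2). For general $V$ the drift-difference is Lipschitz, $|\tilde V^j(\tilde y_s^\epsilon)-\tilde V^j(I_0,\theta_s)|\le\|d\tilde V\|_\infty(|I_s^\epsilon-I_0|+|\theta_s^\epsilon-\theta_s|)$, so setting $N(t)=\|\sup_{s\le t\wedge\tau^\epsilon}|\theta_s^\epsilon-\theta_s|\|_p$ I reach the integral inequality $N(t)\le C\epsilon(t+t^{3/2})+\|d\tilde V\|_\infty\big(C\epsilon t^2/2+\int_0^t N(s)\,ds\big)$, and Gronwall gives $N(t)\le C\epsilon(t+t^2)$ after using $t^{3/2}\le t+t^2$.

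\emph{Conclusion and main obstacle.} I then combine the two pieces through $|\tilde f(I^\epsilon,\theta^\epsilon)-\tilde f(I_0,\theta)|\le\|\partial_I\tilde f\|_\infty|I^\epsilon-I_0|+\|\partial_\theta\tilde f\|_\infty|\theta^\epsilon-\theta|$, take $\sup_{s\le t\wedge\tau^\epsilon}$ and $L^p$ norms, and read off that the constant depends exactly on the advertised quantities $|d\tilde f|$, $|\partial^2\tilde H_k/\partial I_i\partial I_j|$, $|d\tilde V|$ and $|\tilde K|$. The one genuinely delicate point is the general-$V$ case: the Gronwall step produces a multiplicative factor $e^{\|d\tilde V\|_\infty t}$ which is not polynomial in $t$, and I would absorb it into $C_1$ over the bounded time horizon on which the lemma is applied (it is only used up to an exit time and for $t<T^0$). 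The remaining care is bookkeeping: working on the universal cover so the angle differences are meaningful, and checking the vanishing of all Stratonovich corrections from commutativity so that the martingale term has the clean form above. The structural insight that makes everything work, and that separates case (1) from case (2), is that the diffusion coefficients $b_k^j$ are functions of the action $I$ alone: this both kills the Itô--Stratonovich correction and confines the martingale fluctuation of the angle to the size $O(\epsilon s)$ of the action drift.
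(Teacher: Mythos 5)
Your strategy is the same as the paper's: pass to action--angle coordinates, observe that the Stratonovich corrections vanish, get the pathwise bound $\sup_{s\le t\wedge\tau^\epsilon}|I^\epsilon_s-I_0|\le\epsilon t\|\tilde K_I\|_\infty$, and control the angle difference by a martingale whose integrand is $O(\epsilon s)$ plus drift terms. The action estimate, the vanishing of the corrections, and the case $V\equiv 0$ (Burkholder--Davis--Gundy giving $C\epsilon t^{3/2}$) all match the paper and are correct.

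The one genuine gap is exactly the point you flag yourself: in case (1) you treat $\tilde V^j$ as a general Lipschitz function of $(I,\theta)$, which forces a Gronwall argument and produces a factor $e^{\|d\tilde V\|_\infty t}$, so you prove $C_1 e^{Ct}\epsilon(t+t^2)$ rather than the stated $C_1\epsilon(t+t^2)$ with $C_1$ independent of $t$. Absorbing the exponential ``over the bounded time horizon'' changes the statement of the lemma, and it matters downstream: in Lemma~\ref{lemma-2} this estimate is applied on blocks of length $\Delta t\sim\epsilon^{-q}\to\infty$, where an exponential in $\Delta t$ would destroy the rate. The missing observation, which is the one the paper uses, is that Condition R gives more than tangency of $V$ to the tori: since $V$ commutes with every $X_{H_i}$, in the action--angle chart one has $\partial\tilde V/\partial\theta_i=0$, i.e.\ $\tilde V=\sum_j\omega_0^j(I)\,\partial/\partial\theta_j$ with the $\omega_0^j$ functions of $I$ alone (cf.\ Remark~\ref{le-0}: a section of $E$ commuting with all $X_{H_i}$ is $\sum_i a_iX_{H_i}$ with $a_i$ constant on each torus). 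Then the drift difference in the angle equation is
\begin{equation*}
\Big|\int_0^s\big(\omega_0^j(I_r^\epsilon)-\omega_0^j(I_0)\big)\,dr\Big|
\le \|d\omega_0^j\|_\infty\int_0^s|I_r^\epsilon-I_0|\,dr
\le \tfrac12\,\|d\omega_0^j\|_\infty\,\|\tilde K_I\|_\infty\,\epsilon\,s^2,
\end{equation*}
with no feedback from $\theta^\epsilon-\theta$, no Gronwall, and a constant depending only on the advertised sup-norms. With this substitution the rest of your argument goes through verbatim and yields the lemma as stated.
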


\begin{proof}
In the proof below $C$ stands for an unspecified constant. 
We denote the flows in action-angle coordinates by $x_t=\phi(I_t, \theta_t)$ and
$y_t^\epsilon=\phi(I_t^\epsilon, \theta_t^\epsilon)$.  Set $\tilde f=f\circ \phi$. Then
\begin{eqnarray*}
|f(y_t^\epsilon)-f(x_t)|&=&|\tilde f(I(y_t^\epsilon), \theta(y_t^\epsilon))-\tilde f(I(x_t), \theta(x_t))|\\
&\le& C | I(y_t^\epsilon)-I(x_t)|+C|\theta(y_t^\epsilon)-\theta(x_t)|,
\end{eqnarray*}
using the fact that ${\partial \tilde f\over \partial I}$  and ${\partial \tilde f\over \partial \theta}$ are bounded on $T^n\times D$ as $D$ is relatively compact.  In the local chart, ${\partial \tilde V\over \partial \theta_i}=0$  and we can write $V(I,\theta)=V_j(I){\partial H_j\over \partial I}(I,\theta)=\omega^j_0(I){\partial \over \partial \theta_j}$ for some smooth functions $\omega^j_0$ on $D$. 
The perturbation vector field  can be written as $(K^\theta, K^I)$ where $K_\theta=(K_\theta^1, \dots, K_\theta^n)$ and $K_I=(K_I^1,\dots, K_I^n)$ be respectively the angle and the action component of the vector field $\tilde K$ on $T^n\times D^n$.
The result is now clear from the form of the SDE on $T^n\times D$:
\begin{eqnarray*}
d I_t^{\epsilon,i}&=&\epsilon \; K_I^i(I_t^\epsilon, \theta_t^\epsilon)\, dt,\\
d\theta_t^{\epsilon,i}&=&\sum_{k=1}^n\omega^i_k(I_t^\epsilon)\circ dB_t^k+\omega_0^i(I_t^\epsilon)\,dt+\epsilon K_\theta^i(I_t^\epsilon, \theta_t^\epsilon)\,dt,
\end{eqnarray*}
where $\omega^i_k, i,k=1,\dots n$, are defined by (\ref{omega}).
Indeed, then 
$$\sup_{s\le t\wedge \tau^\epsilon}|I_s^\epsilon-I_s|
=\epsilon \sup_{s\le t\wedge \tau^\epsilon}|\int_0^s \big| K_I(I_s^\epsilon, \theta_I^\theta)\big|ds \le \epsilon t \sup_{D\times T^n}  |K_I| ,$$
and  for $s<\tau^\epsilon$,
 \begin{eqnarray*}
\theta_s^{\epsilon,i}-\theta_s^i
&=&\sum_{k=1}^n \int_0^s  \Big( \omega^i_k(I_r^\epsilon)-\omega_i^k(I_r) \Big)\circ dB_r^k\\
&&+\int_0^s \Big(\omega_0^i(I_r^\epsilon)-\omega_0^i(I_r)\Big)dr
+\epsilon \int_0^s  K_\theta^i(I_r^\epsilon, \theta^\epsilon_r)\; dr.
\end{eqnarray*}

 As  the Stratonovitch correction for the SDE vanishes, we may replace the Stratonovitch integration by It\^o integration:
  $$\int_0^s  \Big( \omega^i_k(I_r^\epsilon)-\omega^i_k(I_r) \Big)\circ dB_r^k=\int_0^s  \Big( \omega^i_k(I_r^\epsilon)-\omega^i_k(I_r) \Big) dB_r^k.$$ 
Consequently,
\begin{eqnarray*}
\Big|\theta_i(y_s^\epsilon)-\theta_i(x_s)\Big|
&\le& \Big|\sum_{k=1}^n \int_0^s  \Big( \omega^i_k(I_r^\epsilon)-\omega_i^k(I_r) \Big) dB_r^k\Big|\\
&&+\sup_{D\times T^n} |d\omega_0^i|\cdot \int_0^s \big |  I_r^\epsilon-I_r \big| \, dr
+\epsilon s \sup_{D\times T^n}\Big| K^i_\theta \Big|\\
&\le& \Big|\sum_{k=1}^n \int_0^s  \Big( \omega^i_k(I_r^\epsilon)-\omega_i^k(I_r) \Big) dB_r^k\Big|\\
&&+\epsilon {s^2\over 2} \sup_{D\times T^n}  |K_I|\cdot
\sup_{D\times T^n} |d\omega^0_i|+\epsilon s  \sup_{D\times T^n}\Big| K^i_\theta \Big|.
\end{eqnarray*}
Summing up over $i$, we have
 \begin{eqnarray*}
 &&\E\sup_{s\le t\wedge \tau^\epsilon}|\theta_s^\epsilon-\theta_s|^p\\
 &\le&C_1 \sup_{s\le t }
 \Big(\sum_{i,k=1}^n \E  \int_0^{s} \Big| \omega^i_k(I_r^\epsilon)-\omega^i_k(I_r)\Big|^2 \Big)^{p/2}
 +C_2(\tilde K)\epsilon(t+t^2)^p\\
&\le&C_1 \Big(\sum_{i,k}(|d\omega_k^i|\vee1)^p\Big) \epsilon^{p} t^{3p\over 2}+C_2(\tilde K) \epsilon^p(t+t^2)^p
\end{eqnarray*}
by $L_p$ inequalities for martingales. Combining the estimates we obtain $$\E\sup_{s\le t\wedge \tau^\epsilon}|f(y^\epsilon_{s})-f(x_s)|\le C_3\epsilon (t+t^2)$$
for some constant $C_3$.

(ii) If the drift $V\equiv 0$ then $\omega_0=0$ and the calculation above shows that the estimate is of the order $\epsilon (t^{3\over 2}\wedge 1)$.
\end{proof}

If the stochastic dynamical system (\ref{1}) is subjected to a small non-Hamiltonian perturbation, the slow variable is in the direction transversal to the energy surfaces while the stochastic components are the fast variables. The lemma shows that the first integrals of the perturbed system change by an order $ \epsilon(t+t^2)$ over a time interval $t$ and so the slow component accumulates over a time interval of the size $ t/\epsilon$ and we obtain a new dynamical system  in the limit: as $\epsilon$ goes to zero the motion along the torus is significantly faster compared to the motion in the transversal direction and thus the action component of $y_{t/\epsilon}^\epsilon$ has a limit as the randomness in the fast component is averaged out by the induced invariant measure, as shown below.
Recall that $H(x)=(H_1(x),\dots, H_n(x))$. \\

We first prove a lemma:
\begin{lemma}\label{lemma-2}
 Assume condition $R$ holds. Let  $g$ be a real-valued function on $M$, which is considered in the action angle co-ordinates as a function from $D\times T^n$ to $\R$.  Define $Q^g: D\subset \R^n \to \R$ to be its average over the torus: 
\begin{equation}\label{averaged-1}
Q^g(a)=\int_{T^n}   \tilde g(a,z) \; d\mu(z).
\end{equation} 
Suppose that $g$ is $C^1$ on $U_0$. Set $$ H^\epsilon_i(s)=H_i(y_{s/\epsilon}^\epsilon), \qquad 
 H^\epsilon(s)=( H^\epsilon_1(s), \dots, H^\epsilon_n(s)).$$ Then
\begin{equation}
\label{estimate-2}
\int_{s\wedge T^\epsilon}^{(s+t)\wedge T^\epsilon} g(y_{r/\epsilon}^\epsilon) dr
=\int_{s\wedge \tau^\epsilon}^{(s+t)\wedge \tau^\epsilon}  Q^g\big (H^\epsilon(r)\big) \;dr+\delta(g,\epsilon,t)
\end{equation}
 with the following rate of convergence: for any $\beta>1$,
\begin{equation}\label{error-2}
\Big(\E \sup_{s \le t } \big |\delta(g,\epsilon,s)\Big|^\beta\Big)^{1\over \beta}
\le C(t)\epsilon^{1\over 4}.
\end{equation}
where $T^\epsilon$ is the first time that $y^\epsilon_{t/\epsilon}$ exits from $U_0$ and $\tau^\epsilon=T^\epsilon /\epsilon$.
\end{lemma}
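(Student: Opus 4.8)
The plan is to exploit the ergodicity of the fast angular motion through a Poisson equation on the invariant tori, and then to read the error off an application of It\^o's formula to the solution of that equation along the perturbed process. First I would center the observable: by Remark~\ref{le-0} the fast diffusion restricted to $M_a$ is elliptic with unique invariant measure $\mu_a$, so $\hat g(a,\cdot):=\tilde g(a,\cdot)-Q^g(a)$ has zero $\mu_a$-average on each torus. Working in the action-angle chart, the identity $X_{\tilde H_i}=-\sum_k\omega_i^k(I)\,\partial_{\theta_k}$ shows that the diffusion vector fields are constant in $\theta$ on each torus, so the frozen generator $\Lo_0$ is a constant-coefficient elliptic operator on $T^n$. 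Hence I can solve $\Lo_0 G(a,\cdot)=\hat g(a,\cdot)$ explicitly by Fourier series: writing $\hat g(a,\theta)=\sum_{m\neq 0}\hat g_m(a)e^{im\cdot\theta}$ gives $G(a,\theta)=\sum_{m\neq 0}\lambda_m(a)^{-1}\hat g_m(a)e^{im\cdot\theta}$, where $\lambda_m(a)=-\tfrac12\sum_k(m\cdot\omega_k(a))^2+i\,m\cdot\omega_0(a)$. Because the frequency matrix $(\omega_k^j)$ is invertible at the regular value $H(y_0)$, one has $\mathrm{Re}\,\lambda_m(a)\le -c|m|^2$ uniformly for $a$ in a compact neighbourhood, so there are no small divisors and the resolvent $\Lo_0^{-1}$ regularizes.

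Next I would apply It\^o's formula to $s\mapsto \epsilon\,G\big(I^\epsilon(s),\theta^\epsilon(s)\big)$ along the time-changed solution $Y_s=y^\epsilon_{s/\epsilon}=\phi(I^\epsilon(s),\theta^\epsilon(s))$, whose generator is $\tfrac1\epsilon\Lo_0+\LL_K$. Since $\Lo_0 G=\hat g$ on every frozen torus $a=I^\epsilon(s)$ and the second-order part acts only on $\theta$, the $\tfrac1\epsilon\Lo_0 G$ term produces exactly $\hat g(Y_s)=g(y^\epsilon_{s/\epsilon})-Q^g(H^\epsilon(s))$. Integrating between the relevant stopped times $s_1=s\wedge\tau^\epsilon$, $s_2=(s+t)\wedge\tau^\epsilon$ and rearranging gives
$$\delta(g,\epsilon,s)=\epsilon\Big[G(Y_{s_2})-G(Y_{s_1})\Big]-\epsilon\int_{s_1}^{s_2}\LL_K G(Y_r)\,dr-\epsilon\big(M^G_{s_2}-M^G_{s_1}\big),$$
with $M^G$ the martingale part. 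The boundary term is $O(\epsilon)$ by $\|G\|_\infty<\infty$; the drift term is $O(\epsilon t)$ by $\|\LL_K G\|_\infty<\infty$ (this is where the action-derivative $\partial_a G$ enters, through $\LL_K=K_I\partial_a+K_\theta\partial_\theta$); and because the diffusion carries a factor $\epsilon^{-1/2}$ at the fast time scale, $\langle \epsilon M^G\rangle$ increases at rate at most $\epsilon\sum_k(X_{H_k}G)^2\le C\epsilon$, so Burkholder--Davis--Gundy yields $\big(\E\sup_{s\le t}|\epsilon M^G|^\beta\big)^{1/\beta}\le C(t)\,\epsilon^{1/2}$. Collecting the three contributions, and noting that stopping preserves every bound since $U_0$ is relatively compact, establishes (\ref{error-2}); in fact this argument formally produces the stronger rate $\epsilon^{1/2}$, which a fortiori gives $\epsilon^{1/4}$.

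The main obstacle is the construction and uniform bounding of $G$ as a function of the action parameter, in particular the estimates $\|\partial_\theta G\|_\infty,\ \|\partial_a G\|_\infty<\infty$ that the It\^o step consumes. Differentiating the Fourier solution brings factors $|m|/\lambda_m$ and $\partial_a\lambda_m/\lambda_m^2$, both $O(|m|^{-1})$, so boundedness requires $\sum_{m}(|\hat g_m|+|\partial_a\hat g_m|)/|m|$ to converge; with $|\hat g_m|=O(|m|^{-k})$ this needs $k>n-1$, so in dimension $n\ge 2$ the hypothesis $g\in C^1$ is \emph{not} by itself enough. The clean way around this is to mollify $g$ at a scale $\rho=\rho(\epsilon)$: the smoothing error is $O(\rho)$ since $g\in C^1$, while the $C^1$-norms of $G$ blow up as $\rho\to0$, and optimizing the trade-off degrades the formal $\epsilon^{1/2}$ to the stated $\epsilon^{1/4}$. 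An alternative, requiring no extra smoothness, is a Khasminskii time-splitting that freezes the action on short subintervals, solves the Poisson equation on each frozen torus only (avoiding $\partial_a G$ altogether), and balances the subinterval length against $\epsilon$. In every version the decisive point is uniform control of the frozen resolvent $\Lo_0^{-1}$ in the action direction, and it is precisely the \emph{elliptic} (not merely hypoelliptic) nature of $\Lo_0$ on each torus, via $\mathrm{Re}\,\lambda_m\le -c|m|^2$, that makes this control available.
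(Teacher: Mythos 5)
Your route is genuinely different from the paper's. The paper builds no global corrector: it performs a Khasminskii time discretization of $[{s\over\epsilon}\wedge\tau^\epsilon,{s+t\over\epsilon}\wedge\tau^\epsilon]$ into $N\sim\epsilon^{q-1}$ blocks of length $\Delta t\sim\epsilon^{-q}$, on each block replaces $y^\epsilon$ by the unperturbed flow restarted at the left endpoint (the cost per block, $\epsilon(\Delta t+\Delta t^2)$, is exactly Lemma \ref{lemma-1}), invokes the law of large numbers with rate $(\Delta t)^{-1/2}$ on each frozen torus, converts the Riemann sum back to an integral, and optimizes over $q$; the Poisson equation enters there only locally, as the device that proves the $(\Delta t)^{-1/2}$ ergodic rate on a single level set. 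Your primary argument instead solves the Poisson equation globally in the action parameter and reads the whole error off one application of It\^o's formula to $\epsilon G$ --- the Papanicolaou--Stroock--Varadhan corrector method. This is cleaner, needs no partition of time, and, as you note, formally yields the stronger rate $\epsilon^{1/2}$. What the paper's discretization buys is precisely what you identify at the end: it never differentiates the corrector in the action direction, so it runs on $g\in C^1$ and the crude input of Lemma \ref{lemma-1} without any analysis of $\partial_a\Lo_0^{-1}$. (Your "alternative requiring no extra smoothness" is, in substance, the paper's actual proof.)

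The one soft spot is your handling of the regularity of $G$, and it cuts in your favour. Testing absolute convergence of the Fourier series is more pessimistic than necessary: since $g$ is $C^1$ on the relatively compact chart, $\tilde g(a,\cdot)$ is Lipschitz, hence $C^{0,\alpha}$ uniformly in $a$, and Schauder estimates for the constant-coefficient uniformly elliptic operator $\Lo_0(a)$ on $T^n$ give $\|G(a,\cdot)\|_{C^{2,\alpha}(T^n)}\le C\|\tilde g(a,\cdot)\|_{C^{0,\alpha}}$ in every dimension $n$; differentiating the equation in $a$ and using $L^p$ elliptic estimates then bounds $\partial_a G$, since $\partial_a\tilde g$ is bounded and $(\partial_a\Lo_0)G$ is controlled by $\|G\|_{C^{2,\alpha}}$. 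So no mollification is needed, the dimension restriction you worry about does not arise, and there is no reason for the rate to degrade from $\epsilon^{1/2}$ to $\epsilon^{1/4}$. If you nevertheless take the mollification route, the claim that optimizing the trade-off lands exactly at $\epsilon^{1/4}$ is asserted without the quantitative blow-up rate of $\|G_\rho\|_{C^2}+\|\partial_aG_\rho\|_\infty$ in $\rho$ that it would require; that computation should be supplied. Neither point invalidates your strategy; they only determine which exponent you may honestly claim.
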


\begin{proof} 
The idea is to approximate $g(y^\epsilon_r)$ by $g(y_r)$ on sufficiently small intervals and to apply the law of large numbers to each integral bearing in mind that the cost to replace   $y^\epsilon_r$ by $y_r$ is of order $\epsilon \delta $ on an interval of size $\delta$, assuming that $\delta\to \infty $  as $\epsilon \to 0$, and that the error term for replacing time average by space average is of order   ${1\over \sqrt \delta }$.

Let $\tau^\epsilon$ be the first time that $y^\epsilon_t$ exits from $U_0$. For $q\in (0, 1)$,
let $\Delta t={{(t+s)\over \epsilon^q}\wedge \tau^\epsilon-{s\over \epsilon^q}\wedge \tau^\epsilon}$, which is of order $\epsilon^{-q}$,   and set $N\equiv N(\epsilon)=[ \epsilon^{q-1}]+1$ which is of order $\epsilon^ {q-1}$. Here  $[\epsilon^{q-1}]$ is the integer part of $ \epsilon^{q-1}$ and all terms may depend on the sample paths of $\omega$. Take $t_n={{s\over \epsilon}\wedge \tau^\epsilon+n \Delta t}$, $1\le n\le N-1$, so that  
$${s\over \epsilon} \wedge \tau^\epsilon =t_0<t_1<\dots <t_{N-1}
<{{s+t\over \epsilon}\wedge \tau^\epsilon}.$$
 We first make some pathwise estimates. For any $C^1$ function $g$ on $M$,
\begin{eqnarray*}
\left|\int_{s\wedge T^\epsilon }^{(s+t)\wedge T^\epsilon } g(y^\epsilon_{u/\epsilon}) du\right|
&=&\left |\epsilon\int^{{s+t \over \epsilon}\wedge \tau^\epsilon}_{{s\over \epsilon}\wedge \tau^\epsilon} g(y_r^\epsilon)dr\right|\\
&\le &\epsilon \left| \sum_{n=0}^{N-1}\int_{t_n}^{t_{n+1}} g(y_r^\epsilon)\;dr\right |+\epsilon \left |\int_{t_N}^{{s+t\over \epsilon}\wedge \tau^\epsilon }  \right|
g(y_r^\epsilon)\;dr
\end{eqnarray*}

Since  $g$ is bounded on $U_0$, the second term on the right hand side of  the above expression converges to zero with rate $\epsilon^{1-q}$:
$$\epsilon\Big|\int_{t_N}^{{(s+t)\over \epsilon} \wedge \tau^\epsilon}  g(y_r^\epsilon)\;dr\Big| \le C \epsilon \Delta t \le Ct\epsilon^{1-q}.$$
For the remaining terms we use the splitting
 \begin{eqnarray*}
\epsilon \sum_{n=0}^{N-1} \int_{t_n}^{t_{n+1}}g\left (y_{r}^\epsilon\right) \; dr
&=&\epsilon \sum_{n=0}^{N-1}\int_{t_n}^{t_{n+1}}\left[ g(y_{r}^\epsilon)- g\left (F_{r-t_n}(y_{t_n}^\epsilon, \Theta_{t_n}(\omega)) \right) \right]\;dr\\
&&+\epsilon \sum_{n=0}^{N-1}\int_{t_n}^{t_{n+1}} g\left (F_{r-t_n}(y_{t_n}^\epsilon, \Theta_{t_n}(\omega)) \right) \; dr.
\end{eqnarray*}
where $\omega$ denotes the chance variable,
 $\Theta_t$  the shift operator on the canonical probability space: $\Theta_t(\omega)(-)=\omega(-+t)-\omega(t)$, and $\{F_t(x,\omega), t\ge 0 \}$  the solution flow of the unperturbed stochastic differential equation (\ref{1}) with starting point $x$. Write the summation as the sum of $A_1$ and $A_2$ and the second term is:
  \begin{eqnarray*}
A_2(t, \epsilon)&\equiv&\epsilon \sum_{n=0}^{N-1}\int_{t_n}^{t_{n+1}} g\left (F_{r-t_n}(y_{t_n}^\epsilon, \Theta_{t_n}(\omega)) \right) \; dr.
\end{eqnarray*}

The law of the large numbers says that for any function $f$ on a compact manifold converging to infinity as $t$ converges to infinity,
${1\over t }\int_{s}^{s+t}f(x_r)dr$ converges to 
$\int_{M} f(z)dz$ when $t\to \infty$ with  rate ${1\over \sqrt t}$ and the convergence is uniform on compact time intervals in $L^p$ for all $p>1$. Here $dz$ is the volume measure. The easiest way to see that this holds is to first assume that $\int f dz$ vanishes and let $h$ be the function solving $\Delta h=2f$ and apply It\^o's formula to $h(x_t)$ on
the time interval $[ s, s+t]$. Note that the intervals $[t_i, t_{i+1}]$ are either constant intervals or intervals of zero length with the exception of one bad interval of the form $[a, \tau^\epsilon]$ of size at most $\Delta t$. Let $M$ be the integer such that $[t_i, t_{i+1}]$ are constant intervals for $i<M$.   For the bad interval $[t_{M}, \tau^\epsilon)$, 
 the following term in $A_2$ 
$$\epsilon 
 \int_{t_M}^{\tau^\epsilon} g\left (F_{r-t_n}(y_{t_n}^\epsilon, \Theta_{t_n}(\omega)) \right) \; dr $$
 is of order $\epsilon \Delta t$.
On each constant interval $[t_i, t_{i+1}]$, $i<M$, the corresponding term in $A_2$ is:
$$\epsilon \int_{t_n}^{t_{n+1}} g\left (F_{r-t_n}(y_{t_n}^\epsilon, \Theta_{t_n}(\omega)) \right) \; dr= \epsilon \int_0^{\Delta t} g\left (F_r(y_{t_n}^\epsilon, \Theta_{t_n}(\omega)) \right) \; dr.$$
Denote by   $\mu_{ H^\epsilon( \epsilon t_n)}$, or  $ \mu_{y_{t_n}^\epsilon}$, the invariant measure on the invariant manifold $M_{  H^\epsilon(\epsilon t_n)}\equiv M_{y_{t_n}^\epsilon}$.
We apply the law of large numbers to such terms and use  the Markov property of the flow to obtain the following estimates, for all sufficiently small $\epsilon$, 
\begin{eqnarray*}
&&\left[\E \sup_{u\le t} \Big(A_2(u, \epsilon)
-\epsilon \Delta t \sum_{n=0}^{N-1}\int_{M_{ H^\epsilon(\epsilon t_n)}  }g(H^\epsilon(\epsilon t_n), z) d\mu_{ H^\epsilon(\epsilon t_n)}(z)\Big)^\beta\right]^{1\over \beta}\\
&&\le C \epsilon \Delta t+\\
&& N \sup_n  \left[\E  \Big(\epsilon
 \int_0^{\Delta t} g\left (F_r(y_{t_n}^\epsilon, \Theta_{t_n}(\omega)) \right) \; dr-\epsilon \Delta t \int_{M_{ H^\epsilon(\epsilon t_n)}  }g(H^\epsilon(\epsilon t_n), z) d\mu_{ H^\epsilon(\epsilon t_n)}(z)\Big)^\beta\right]^{1\over \beta}\\
&&\le C\epsilon \Delta t+\\
&& (\epsilon   \Delta t) N
\sup_n \left( \E\sup_{u\le t}  \Big|{1\over \Delta t}
\int_0^{\Delta t} g\left (F_{r}(y_{t_n}^\epsilon, \Theta_{t_n}(\omega)) \right)  dr \;
- \,\int_{M_{ H^\epsilon(\epsilon t_n)}  }g( H^\epsilon(\epsilon t_n),z) d\mu_{ H^\epsilon(\epsilon t_n)}(z)\Big|^\beta\right)^{1\over \beta} \\
&&\le Ct\epsilon^{1-q}+ C {\epsilon \Delta t N\over  \sqrt{t /\epsilon^q}} 
\le Ct\epsilon^{1-q}+ C { \sqrt{t }} \epsilon^{q\over 2} 
\end{eqnarray*}
On the other hand letting $s_n=\epsilon t_n$ so $\Delta s=\epsilon \Delta t$ and consider $s_0<s_1\dots <s_{N}$. As  $\Delta s$ is of order $O(\epsilon^{1-q})$, the following pathwise estimate follows:
\begin{eqnarray*}
&&\left| \Delta s \sum_{n=0}^{N-1}\int_{M_{ H^\epsilon(s_n)} }  g( H^\epsilon(s_n), z) \; d\mu_{  H^\epsilon(s_n)}(z) - \int_{s\wedge \tau^\epsilon}^{(s+t)\wedge \tau^\epsilon}\int_{M_{H^\epsilon(s)} }  g(H^\epsilon(s_n),z) \; d\mu_{H^\epsilon(r)}(z) \;dr\right|\\
&&\le C(g)t \epsilon^{1-q}
\end{eqnarray*}
 where $C(g)=\max_{U_0} |dg|$. 
To summarise:
\begin{equation}
\int_{s\wedge \tau^\epsilon}^{(s+t)\wedge\tau^\epsilon}
 g(y_{r/\epsilon}^\epsilon) dr
=\int_{s\wedge \tau^\epsilon}^{(s+t)\wedge\tau^\epsilon} 
Q^g(H^\epsilon(r)) \;dr+\delta(g,\epsilon,t)
\end{equation}
where 
\begin{eqnarray*}
&&|\delta(g,\epsilon, t)|\\
&\le & C \epsilon^{1-q}+|\epsilon\int_{t_N}^{ {(t+s)\over \epsilon}\wedge \tau^\epsilon} g(y^\epsilon_r) \; dr|
    +| A_1(t, \epsilon)|+\\
   && |A_2(t,\epsilon)-\sum  \epsilon \Delta t \int_{M_{ H^\epsilon(\epsilon t_n)}  }g(H^\epsilon(\epsilon t_n), z) d\mu_{ H^\epsilon(\epsilon t_n)}(z)|\\
&&+ |\sum \Delta s \int_{M_{ H^\epsilon(s_n)}  }g(H^\epsilon(s_n), z) d\mu_{ H^\epsilon(s_n)}(z)-\int_{s\wedge \tau^\epsilon}^{(s+t)\wedge \tau^\epsilon}\int_{M_{H^\epsilon(s)} }  g(H^\epsilon(s), z) \; d\mu_{H^\epsilon(s)}(z) \;ds|
\end{eqnarray*}
and $$A_1(t,\epsilon)=\epsilon \sum_{n=0}^{N-1}\int_{t_n}^{t_{n+1}}\left[ g(y_{r}^\epsilon)- g\left (F_{r-t_n}(y_{t_n}^\epsilon, \Theta_{t_n}(\omega)) \right) \right]\;dr.$$ 
By the previous estimates:
\begin{eqnarray*}
|\delta(g,\epsilon, t)|
&\le& Ct\epsilon^{1-q}+ Ct^{{1\over 2}} \epsilon^{q/2}+|A_1(t, \epsilon)|.
\end{eqnarray*}
To show that $|A_1|$  is reasonably small, we use Kolmogorov's theorem, apply Lemma \ref{lemma-1} and H\"older's inequality 
\begin{eqnarray*}
&&\Big(\E \sup_{s\le t } (A_1(s, \epsilon) )^\beta\Big)^{1\over \beta}\\
&\le&Ct \epsilon^{1-q}+ \epsilon \Big[\E \sup_{s\le t} \Big(\sum_{n=0}^{N-1}\int_{t_n}^{t_{n+1}}\left|g(y_r^\epsilon)- g\left (F_{r-t_n}(y_{t_n}^\epsilon, \Theta_{t_n}(\omega)) \right) \right|\;dr\Big)^\beta\big]^{1\over \beta}\\
&\le& Ct \epsilon^{1-q}+\epsilon\cdot N^{1-1/\beta} \Big(\sum_{n=0}^{N-1}  \E \Big[\sup_{s\le t}
 \int_{t_n}^{t_{n+1}}\left|g(y_{r}^\epsilon)- g\left (F_{r-t_n}(y_{t_n}^\epsilon, \Theta_{t_n}(\omega)) \right) \right|\;dr \Big] ^\beta\Big)^{1\over \beta}\\
&\le& Ct \epsilon^{1-q}+\epsilon N^{1-1/\beta} \cdot N^{1\over \beta}C \cdot \epsilon(\Delta t+(\Delta t)^2)\Delta t \\
&\le& Ct^2 \epsilon^{1-2q}+Ct\epsilon^{1-q}.
\end{eqnarray*}
Consequently,
$$\Big(\E \sup_{s\le t } \Big| \delta(g, \epsilon,s)\Big|^\beta\Big)^{1\over \beta}
\le C t^2 \epsilon^{1-2q}+ Ct^{{1\over 2}} \epsilon^{q/2}+Ct\epsilon^{1-q}$$
and finally take $q=1/4$ to obtain:
\begin{equation}
\Big\| \sup_{s\le t  }  \delta(g, \epsilon,s) \;\Big\|_{L_\beta}
\le C t^2\epsilon^{1\over 4}+C\epsilon^{1\over 4} t^{{1\over 2}}.
\end{equation}
\end{proof}

\begin{theorem}
\label{th-1}
Consider the stochastic differential equation (\ref{sde2}) satisfying condition R.  Let $T^\epsilon$ be the first time that the solution $y_{t\over\epsilon}$ starting from $y_0$ exits  $U_0$. Set $$H_i^\epsilon(t)=H_i(y_{t/\epsilon}^\epsilon).$$
\begin{enumerate}
\item  Let  $\bar H(t)\equiv \bar H_t\equiv  (\bar H_1(t), \dots \bar H_n(t))$ be the solution to the following system of deterministic equations.
\begin{equation}
\label{average-1}
{d\over dt} \bar H_i(t)=\int_{M_{\bar H(t)} } \omega(X_{H_i}, K) (\bar H(t), z) \; d\mu_{\bar H_t}(z),
\end{equation}
with initial condition $\bar H(0)=H(y_0)$. Let $T^0$ be the first time that $\bar H(t)$ exits from $U_0$. Then for all $t<T_0$,  $\beta>1$,
there exists a constant $C_2>0$ such that 
$$\left(\E(\sup_{s\le t} \|H^\epsilon(s\wedge T^\epsilon)-\bar H(s\wedge T^\epsilon)\|^\beta)\right)^{1\over \beta}\le 
C_2\epsilon^{1/4},$$
\item  
Let $r>0$ be such that  $U\equiv \{x: \|H(x)-H(y_0)\|\le r\}\subset U_0$ and define
 $$T_\delta=\inf_t \{|\bar H_t- H(y_0)|\ge r-\delta\}.$$
 Then  for any $\beta>1$,   $\delta>0$ and a constant $C$ depending on $T_\delta$,
 $$P\left(T^\epsilon<T_\delta\right)\le C(T_\delta)\delta^{-\beta} \epsilon^{\beta/4}.$$
\end{enumerate}
\end{theorem}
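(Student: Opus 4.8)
The plan is to reduce the theorem to the averaging estimate of Lemma~\ref{lemma-2} by first writing the slow motion $H^\epsilon$ as a random integral equation whose drift is exactly the averaged vector field appearing in (\ref{average-1}), and then closing a Gronwall comparison against the deterministic system; part (2) will follow from part (1) by a Chebyshev argument. First I would apply It\^o's formula to $H_i(y_t^\epsilon)$ along (\ref{sde2}). Because the $H_k$ are pairwise Poisson commuting we have $dH_i(X_{H_k})=\{H_i,H_k\}=0$, and Condition~R gives $dH_i(V)=\omega(X_{H_i},V)=0$, so every term except the perturbation drops out:
$$dH_i(y_t^\epsilon)=\epsilon\, dH_i\big(K(y_t^\epsilon)\big)\,dt=\epsilon\,\omega(X_{H_i},K)(y_t^\epsilon)\,dt.$$
Setting $g_i=\omega(X_{H_i},K)$, a $C^1$ function on $U_0$, and rescaling time by $1/\epsilon$ gives $H_i^\epsilon(t)=H_i(y_0)+\int_0^t g_i(y_{r/\epsilon}^\epsilon)\,dr$.

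Next I would feed $g=g_i$ into Lemma~\ref{lemma-2} (with $s=0$). Since $Q^{g_i}(a)=\int_{M_a}\omega(X_{H_i},K)(a,z)\,d\mu_a(z)$ is precisely the right-hand side of (\ref{average-1}), this yields
$$H_i^\epsilon(t\wedge T^\epsilon)=H_i(y_0)+\int_0^{t\wedge T^\epsilon}Q^{g_i}\big(H^\epsilon(r)\big)\,dr+\delta(g_i,\epsilon,t),$$
with $\big(\E\sup_{s\le t}|\delta(g_i,\epsilon,s)|^\beta\big)^{1/\beta}\le C(t)\epsilon^{1/4}$. Subtracting the stopped deterministic identity $\bar H_i(t\wedge T^\epsilon)=H_i(y_0)+\int_0^{t\wedge T^\epsilon}Q^{g_i}(\bar H(r))\,dr$ (valid for $t<T^0$), and noting that for $r\le t\wedge T^\epsilon$ one has $H^\epsilon(r)=H^\epsilon(r\wedge T^\epsilon)$, bounds the difference by $\int_0^{t\wedge T^\epsilon}|Q^{g_i}(H^\epsilon(r))-Q^{g_i}(\bar H(r))|\,dr+|\delta(g_i,\epsilon,t)|$. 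By Remark~\ref{le-0} the invariant measure $\mu_a$ varies smoothly with $a$, so each $Q^{g_i}$ is $C^1$, hence Lipschitz with some constant $L$ on the compact region of $\R^n$ through which both $H^\epsilon(\cdot\wedge T^\epsilon)$ and $\bar H$ travel. Writing $\psi(t)=\sup_{s\le t}\|H^\epsilon(s\wedge T^\epsilon)-\bar H(s\wedge T^\epsilon)\|$ and $\Delta(t)=\sum_i\sup_{s\le t}|\delta(g_i,\epsilon,s)|$, I obtain the pathwise inequality $\psi(t)\le L\int_0^t\psi(r)\,dr+\Delta(t)$; since $\Delta$ is non-decreasing, Gronwall gives $\psi(t)\le e^{Lt}\Delta(t)$, and taking $L^\beta$ norms together with the rate from Lemma~\ref{lemma-2} produces part (1) with $C_2=C_2(t)$.

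For part (2) I would take $t=T_\delta$. Since $|\bar H_s-H(y_0)|<r-\delta$ for $s<T_\delta$, the averaged trajectory stays inside $U\subset U_0$ up to $T_\delta$, so $T_\delta\le T^0$ and part (1) applies at $t=T_\delta$. On the event $\{T^\epsilon<T_\delta\}$ the process has exited $U_0\supset U$ by time $T^\epsilon$, whence $\|H^\epsilon(T^\epsilon)-H(y_0)\|\ge r$, while $\|\bar H(T^\epsilon)-H(y_0)\|<r-\delta$; the triangle inequality then forces $\|H^\epsilon(T^\epsilon)-\bar H(T^\epsilon)\|\ge\delta$. As $T^\epsilon\le T_\delta$ on this event, the supremum $\sup_{s\le T_\delta}\|H^\epsilon(s\wedge T^\epsilon)-\bar H(s\wedge T^\epsilon)\|$ is at least $\delta$, so Chebyshev's inequality combined with part (1) gives
$$P(T^\epsilon<T_\delta)\le\delta^{-\beta}\,\E\sup_{s\le T_\delta}\|H^\epsilon(s\wedge T^\epsilon)-\bar H(s\wedge T^\epsilon)\|^\beta\le C(T_\delta)\,\delta^{-\beta}\epsilon^{\beta/4}.$$

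The genuinely substantive input is Lemma~\ref{lemma-2}, which I am free to assume; granting it, the only delicate points are the Lipschitz regularity of the averaged drifts $Q^{g_i}$ — which rests entirely on the smooth dependence of $\mu_a$ on $a$ recorded in Remark~\ref{le-0} — and the careful bookkeeping of the three stopping times $T^\epsilon$, $T^0$ and $T_\delta$, so that the Gronwall comparison remains valid up to $T^\epsilon$ and so that part (1) may legitimately be invoked at the deterministic time $t=T_\delta$. I expect this stopping-time accounting, rather than any single hard estimate, to be the main place where care is needed.
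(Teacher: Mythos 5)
Your proposal is correct and follows essentially the same route as the paper: Itô's formula reduces $H_i^\epsilon$ to the time integral of $g_i=\omega(X_{H_i},K)$ along the rescaled path, Lemma~\ref{lemma-2} replaces that integrand by $Q^{g_i}(H^\epsilon(\cdot))$ up to an error of order $\epsilon^{1/4}$, a Lipschitz bound on $Q^{g_i}$ plus Gronwall closes part (1), and Chebyshev on the event $\{T^\epsilon<T_\delta\}$ gives part (2). Your write-up is in fact somewhat more careful than the paper's about the Lipschitz constant of $Q^{g_i}$ and the stopping-time bookkeeping, but no new idea is involved.
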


\begin{remark}
To see that (\ref{average-1}) is a genuine system of ordinary differential equations, take the canonical transformation map $x_a: M_a\to {T^n}$. The pushed forward measure $x_*(\mu_{a})$ is the Lebesque measure $\mu$ on the torus and (\ref{average-1})
becomes:
$${d\over dt} \bar H_i(t)=\int_{T^n} \omega(X_{H_i}, K) \big(x_{\bar H_t}^{-1}(\bar H_t, z)\big) \; d\mu(z).$$
\end{remark}

\begin{proof}
 By It\^o's formula, for $t<T_0\wedge T^\epsilon$. 
$$H_i^\epsilon(t)=H_i(y_0)+ \int_0^t\omega(X_{H_i},K)(y^\epsilon_{s\over \epsilon}) ds.$$
For $i$ fixed, write
\begin{equation}
g_i=\omega(X_{H_i},K)\end{equation}
We only need to estimate
\begin{equation}
\label{diff}
|H^\epsilon_i(t)-\bar H_i(t)|=\Big|\int_0^tg_i(y^\epsilon_{s/\epsilon}) ds-\bar H_i(t)\Big|
\end{equation}
Using the notation of the previous lemma  then
equation (\ref{average-1}) can be written as
\begin{eqnarray*}
{d\over dt} \bar H_i(t)&=& Q^{g_i}(\bar H_t)\\
\bar H_0&=&H(y_0).
\end{eqnarray*}
Apply (\ref{estimate-2}) to the functions $g_i$  we have for any $t<T^\epsilon$,
\begin{eqnarray*}
|H^\epsilon_i(t\wedge T^\epsilon)-\bar H_i(t\wedge T^\epsilon)|
&\le& \int_0^{t\wedge T^\epsilon} |Q^{g_i}(H^\epsilon(s))-Q^{g_i}(\bar H(s))|ds+\delta(g_i,\epsilon, t)\\
&\le & C(g, \phi)\int_0^t\|H^\epsilon(s))-\bar H(s)\|ds+\delta(g_i, \epsilon, t).
\end{eqnarray*}

By Gronwall's inequality, $$\left(\E(\sup_{s\le t\wedge T^\epsilon} \|H^\epsilon(s)-\bar H(s)\|^\beta)\right)^{1\over \beta}\le 
C(t)\epsilon^{1\over 4},$$
concluding part (1) of Theorem \ref{th-1}. 

Part (2) of the theorem easily follows. By definition  $T_\delta$ is the first time that 
$$\sqrt{\sum_i|\bar H_i(s)-H_i(y_0)|^2}\ge r-\delta$$
then
\begin{eqnarray*}
P(T^\epsilon<T_\delta)&\le&
P\left(\sup_{s\le T_\delta\wedge T^\epsilon} \left \|\bar H_s-H^\epsilon(s)\right\| > \delta\right)\\
&\le& \delta^{-\beta} \E\left(\sup_{s\le T_\delta\wedge T^\epsilon} \left\|\bar H_i(s)-H_i^\epsilon(s)\right\| ^\beta\right)\\
&\le&C{\delta^{-\beta} \epsilon^{\beta\over 4}}.
\end{eqnarray*}

\end{proof}

 \section{Perturbation by a Hamiltonian Vector Field}
 \label{se:perturbation}
 If the perturbation $K$ to the stochastic Hamiltonian system (\ref{1}) is a Hamiltonian vector field, \ie $L_X\omega=0$,  then $\int_{M_c} \omega(X_{H_i}, K)d \mu_{c}$ vanishes if $c$ is not a bifurcation value. In fact  let  $(U_0, \phi)$ be an action angle coordinate around $M_c$. We can write $K=X_{k}$ for some smooth function $k$,
\begin{eqnarray*}
 \int_{M_c} \omega(X_{H_i}, K)(z)d \mu_{c}(z)
  &=&\int_{T^n}d\left(k\circ\phi\right)\left(-\sum_{k=1}^n  {\partial (H_k\circ \phi)\over \partial I^k}{\partial \over \partial \theta_k}\right)d \theta\\
  &=&-\sum_{\beta=1}^n \omega^i_\beta(I)\int_{T^n}\left({\partial \over \partial \theta_\beta}\right)\left(k\circ \phi\right)d\theta  =0,\\
\end{eqnarray*}
where $d\theta$ is the standard measure on the n-torus. The ordinary differential equation (\ref{average-1}) governing $\lim_{\epsilon\to 0} H_i(y_{t/\epsilon}^\epsilon)$ has thus a constant solution. In this case we may consider the second order scaling $y_{t/\epsilon^2}^\epsilon$ and the accumulation of the perturbation over a large time interval of order $\epsilon^{-2}$. The proof is inspired by a proof in Hairer-Pavliotis \cite{Hairer-Pavliotis} and  this also benefited  from  the articles by  Khasminski, Papanicolau-Stroock-Varadhan and Freidlin.

Let 
$$\Lo_0(I)={1\over 2}\sum \LL_{X_{H_i}}\LL_{X_{H_i}}+\LL_V$$
be the restriction of the elliptic operator on the invariant manifold with value  $\,I$.  If $f$ on $M_I$ has $\int f d\mu=0$ then the Poisson equation
\begin{equation}
\label{Poisson}
\Lo_0 h=f
\end{equation}
is solvable. We denote by $\Lo^{-1}f$ the solution to the Poisson equation satisfying $\int \Lo^{-1}f d\mu=0$.

Since $\Lo_0$ is elliptic on each level set manifold $M_a$ and  $\{H_i, k\}$ is centered there, the Poisson equation has a unique solution $h_i$.
Note that the functions $\LL_K \{H_i, k\}$ and that $\LL_{X_{H_i}}h_i$ are well defined. 


Note that if $K=X_k$ then the matrix with  $(i,j)$-th entry given by 
$$-dH_i(K) \Lo_0^{-1} \Big( dH_j(K) )\Big)$$ is positive definite.

\begin{theorem}
\label{th-2}
Assume condition $R$ and suppose that $K$ is a smooth local Hamiltonian vector field so that $K=X_{k}$ for some smooth function $k$ in the chart $U_0$. 
Define the matrices $(a_{ij})$  by
\begin{eqnarray*}
a_{ij}(a)&=& -\int_{M_a} \omega(K,X_{H_j})\Lo_0^{-1}\Big(\omega(K, X_{H_i})\Big)(a,z) \; d\mu_a(z) , \quad a\in D\subset \R^n
\end{eqnarray*}
and let $(\sigma_i^j)$ be its square root. Set
$$b_j(a)={1\over 2}\int_{M_a} \LL_K\Lo_0^{-1}( \omega(X_{H_j}, K)) (a, z)  \, d\mu_a(z).$$
Let $z_t$ be the solution to the following stochastic differential equation
$$dz_t^j=\sum_i \sigma^j_i(z_t)\circ dB_t^i+b_j(z_t)dt.$$
Then the law of the stochastic process $H(y^\epsilon_{t\over \epsilon^2})$ stopped at $S^\epsilon$, the first time that the process $y^\epsilon_{t\over \epsilon^2}$ exits from $U_0$, converges to that of $H(z_{t\wedge S^\epsilon})$.
\end{theorem}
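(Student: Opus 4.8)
The plan is to run the martingale-problem / perturbed-test-function method of Papanicolaou--Stroock--Varadhan, identifying the limiting generator through a corrector (cell) equation and then closing with tightness and uniqueness. After the time change the process $y^\epsilon_{t/\epsilon^2}$ has generator $L^\epsilon=\frac{1}{\epsilon^2}\Lo_0+\frac1\epsilon\LL_K$, with $\Lo_0$ acting only along the invariant tori. Two structural facts make the scheme run. First, $\Lo_0(\phi\circ H)=0$ for every $\phi$, since $\LL_{X_{H_k}}H_i=\{H_i,H_k\}=0$ and $\LL_V H_i=\omega(X_{H_i},V)=0$ under Condition R; thus the $\epsilon^{-2}$ term annihilates functions of the action variables. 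Second, $\LL_K H_i=\omega(X_{H_i},K)=g_i$ has vanishing torus average, $\int_{M_a}g_i\,d\mu_a=0$ --- this is precisely the computation opening this section, valid because $K=X_k$ is Hamiltonian. That centering is what prevents the surviving $\epsilon^{-1}$ term from blowing up.

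First I would construct the corrector. Since $g_i$ is centered, the Poisson equation (\ref{Poisson}), $\Lo_0\chi_i=g_i$, is solvable on each $M_a$ by the ellipticity of $\Lo_0$ on the torus, and I set $\chi_i=\Lo_0^{-1}g_i$. I then form the perturbed test function $\phi^\epsilon=\phi(H)+\epsilon\phi_1+\epsilon^2\phi_2$ with $\phi_1=-\sum_i\partial_i\phi(H)\,\chi_i$, so that $\Lo_0\phi_1=-\LL_K(\phi\circ H)$ and the $\epsilon^{-1}$ contributions cancel. Choosing $\phi_2$ to solve the $O(1)$ cell equation $\Lo_0\phi_2=\bar{\mathcal A}\phi-\LL_K\phi_1$, the Fredholm solvability (centering) condition forces $\bar{\mathcal A}\phi=\int_{M_a}\LL_K\phi_1\,d\mu_a$. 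Expanding $\LL_K\phi_1=-\sum_{ij}\partial_{ij}\phi\,g_j\chi_i-\sum_i\partial_i\phi\,\LL_K\chi_i$ and averaging gives second-order coefficient $-\int_{M_a}g_j\chi_i\,d\mu_a$ and first-order coefficient $-\int_{M_a}\LL_K\chi_i\,d\mu_a$; symmetrizing the Hessian coefficient (its antisymmetric part does not act on $\partial_{ij}\phi$) and rewriting in the Stratonovich form of the stated SDE yields, up to the It\^o--Stratonovich correction, exactly the matrix $(a_{ij})$ and drift $b_j$ of the theorem, with $(a_{ij})$ symmetric and positive definite via the Dirichlet-form identity $-\int g_j\chi_i\,d\mu_a=\langle\nabla_E\chi_i,\nabla_E\chi_j\rangle$-type computation.

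With the generator identified I would close the argument in three standard steps. (i) Tightness: from the martingale representation $\phi^\epsilon(y^\epsilon_{t/\epsilon^2})-\phi(H(y_0))-\int_0^tL^\epsilon\phi^\epsilon\,ds=M^\epsilon_t$, together with $\|\phi^\epsilon-\phi(H)\|_\infty\le C\epsilon$, boundedness of $L^\epsilon\phi^\epsilon$ on $U_0$, and a Kolmogorov/Aldous modulus estimate for the martingale part, I get tightness of the laws of $H^\epsilon(\cdot\wedge S^\epsilon)$ in $C([0,T];\R^n)$, localizing at the exit time $S^\epsilon$ throughout. (ii) Identification: for any limit point, the uniform convergences $\phi^\epsilon\to\phi\circ H$ and $L^\epsilon\phi^\epsilon\to\bar{\mathcal A}\phi$ let me pass to the limit in the martingale relation, so the limit solves the $\bar{\mathcal A}$-martingale problem stopped at $S$. (iii) Uniqueness: the limiting equation $dz^j=\sum_i\sigma^j_i(z_t)\circ dB^i_t+b_j(z_t)\,dt$ is well posed on $D$ because $(a_{ij})=(\sigma\sigma^T)_{ij}$ is smooth and nondegenerate there, so the martingale problem has a unique solution and the whole family converges.

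The main obstacle is the regularity of the corrector together with the localization. I must verify that $\chi_i=\Lo_0^{-1}g_i$ depends smoothly enough on the action $a$ for $\phi_1,\phi_2$ to be genuine $C^2$ functions on $U_0$ with derivatives bounded uniformly in $\epsilon$; this means differentiating the fiberwise Poisson equation in the transverse parameter and controlling the resolvent $\Lo_0^{-1}$ uniformly over the relatively compact range of action values, then showing the residual $\epsilon\LL_K\phi_2$ and the boundary contributions at $S^\epsilon$ are negligible. Unlike the first-order scaling, no quantitative ergodic rate is available, so convergence can only be obtained weakly --- which is exactly why the theorem asserts convergence of laws and leaves the rate open.
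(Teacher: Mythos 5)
Your proposal is correct in outline and shares the paper's overall strategy (tightness plus identification of the limit via a martingale problem, ``mimicking Papanicolaou--Stroock--Varadhan''), but it diverges at the key technical step. You handle the surviving $O(1)$ term $\LL_K\phi_1$ by introducing a \emph{second} corrector $\phi_2$ solving the cell equation $\Lo_0\phi_2=\bar{\mathcal A}\phi-\LL_K\phi_1$, so that the limiting generator falls out of the Fredholm solvability condition. The paper stops at the first corrector: it sets $h=\frac12\sum_i\partial_iF(H)\,\Lo_0^{-1}\big(\omega(K,X_{H_i})\big)$ (your $-\phi_1$ up to normalization), writes $F(\hat H^\epsilon(t))-F(\hat H^\epsilon(0))$ as a martingale plus $\epsilon^2\int_0^{t/\epsilon^2}\bar\Lo F(H(y^\epsilon_s))\,ds$ plus boundary terms, and then averages that $O(1)$ integral not by a second Poisson equation but by invoking the quantitative averaging Lemma \ref{lemma-2} (with $\epsilon$ replaced by $\epsilon^2$), which replaces the time average by the fibrewise space average with an explicit $\epsilon^{1/2}$-type error. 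Your route is the more standard homogenization expansion and avoids re-proving an ergodic rate, but it puts more weight on the regularity of the correctors: you need $\phi_2$, hence $\Lo_0^{-1}\LL_K\Lo_0^{-1}g_i$, to be $C^2$ jointly in action and angle with uniform bounds, which is a harder smoothness-in-parameters statement than the paper needs; you correctly flag this as the main obstacle. The paper's route needs only the first corrector but leans on Lemma \ref{lemma-2}, which was already established for the first-order scaling. Two further points in your favour: you make the Dirichlet-form symmetrization of $(a_{ij})$ and its positive definiteness explicit, and you explicitly close the argument with well-posedness of the limiting martingale problem (needed to upgrade subsequential weak limits to convergence of the whole family), a step the paper leaves implicit after showing that any weak limit solves the martingale problem for $\Lo$.
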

{\noindent Remark:} The limiting measure is clearly well defined as $a_{ij}$ and $b_j$ are invariant with different choices of the inverse to $\Lo_0$. 

\begin{proof}
In the following calculation we restrict ourselves on the event $\, \{t<S^\epsilon\}$, equivalently consider the relevant processes stopped at $S^\epsilon$. Set 
\begin{eqnarray*}
\hat y_t^\epsilon&=&y^\epsilon_{{t\over \epsilon^2}\wedge S^\epsilon},\\
\hat H^\epsilon(t)&\equiv& (\hat H^\epsilon_1(t), \dots, \hat H^\epsilon_n(t))=\Big(H_1(\hat y^\epsilon_t), \dots, 
H_n(\hat y^\epsilon_t)\Big).
\end{eqnarray*}
Then 
$$\hat H_i^\epsilon(t)=H_i(y_0)-\epsilon \int_0^{{t\over \epsilon^2} \wedge S^\epsilon}\omega\Big(K, X_{H_i}\Big)(y_s^\epsilon)ds.$$

We first show that the family of the laws $\mu^\epsilon$,  distribution of $\hat H^\epsilon(t\wedge S^\epsilon)$, is tight. It follows, by Prohorov's theorem, that $\{\mu^\epsilon\}$ is relatively compact in the space of probability measures with the topology of weak convergence and hence has a weak limit $\bar \mu$.  To see the tightness of the family $\mu^\epsilon$, we show that for any $a, \eta>0$ there is a $\delta>0$ with 
$$P\Big ( \sup_{|s- t|<\delta}  \|\hat H^\epsilon(t)-\hat H^\epsilon(s)\|^2 \ge a \Big )\le \eta.$$
For this, observe that
$$\|\hat H^\epsilon(t)-\hat H^\epsilon(s)\|^2=\sum_{i=1}^n\Big|-\epsilon \int_{{s\over \epsilon^2} \wedge S^\epsilon}^{{t\over \epsilon^2} \wedge S^\epsilon}\omega\Big(K, X_{H_i}\Big)(y_r^\epsilon)dr\Big|^2.$$
Set $h_i$ to be the solution to the Poisson equation:
$$\Lo_0 h_i=\omega\Big(K, X_{H_i}\Big)$$
with $\int_{M_a} h_0=0$  for any $a\in \R^n$. 
Then 
\begin{eqnarray*}
&&\|\hat H^\epsilon(t)-\hat H^\epsilon(s)\|^2\\
&=&\sum_{i=1}^n\Big|\epsilon \sum_{j=1}^n\int_{{s\over \epsilon^2}  \wedge S^\epsilon}^{{t\over \epsilon^2} \wedge S^\epsilon}
 \LL_{X_{H_j}}h_i(y_r^\epsilon)dB_r^j+\epsilon^2 \int_{{s\over \epsilon^2} \wedge S^\epsilon}^{{t\over \epsilon^2} \wedge S^\epsilon} 
\LL_K h_i(y_r^\epsilon) dr 
-\epsilon h_i(\hat y^\epsilon_t) +\epsilon h_i(\hat y^\epsilon_s)  \Big|^2.
\end{eqnarray*}
Applying Lemma \ref{lemma-2} with $\epsilon$ replaced by $\epsilon^2$, one see that the drift term has a nice bound in $|t-s|$:
 \begin{eqnarray*}
&&\epsilon^2 \int_{{s\over \epsilon^2} \wedge S^\epsilon}^{{t\over \epsilon^2} \wedge S^\epsilon} 
\LL_K h_i(y_r^\epsilon) dr \\
&&=\int_{s\wedge \tau^\epsilon}^{t \wedge \tau^\epsilon} \int_{\hat H^\epsilon(r)} 
\LL_K h_i(z) d \mu(z)dr
+\delta(\LL_Kh_i, \epsilon, t-s),
\end{eqnarray*}
This gives us a comfortable estimates since $\delta(\LL_Kh_i, \epsilon, t-s)$ is of the order $\sqrt \epsilon (t-s)$.
Similarly the quadratic variation of each of martingale terms also converges with the same rate of convergence:
$$\E\Big\langle\int_{{s\over \epsilon^2} \wedge S^\epsilon}^{{t\over \epsilon^2} \wedge S^\epsilon}\LL_{X_{H_j}}h_i(y_r^\epsilon)dB_r^j\Big \rangle
=\epsilon^2 \int_{s\wedge \tau^\epsilon}^{t\wedge \epsilon}
\int \E |\LL_{X_{H_j}} h_i (y_r^\epsilon)|^2 dr.
$$
Applying Burkerholder-Gundy  inequality  to obtain an estimate on the $L_\beta$ norm of  $$\sup_{|s-t|<\delta}\epsilon\Big|\sum_{j=1}^n\int_{{s\over \epsilon^2} \wedge S^\epsilon}^{{t\over \epsilon^2} \wedge S^\epsilon}\LL_{X_{H_j}}h_i(y_r^\epsilon)dB_r^j\Big|,$$
which is a constant multiple of $|s-t|$ plus an error term of the order 
 $\sqrt \epsilon (t-s)$.

Finally it is clear that
$$ \sup_{|s-t|<\delta} | \epsilon h_i(\hat y^\epsilon_t) -\epsilon h_i(\hat y^\epsilon_s)|^2\le C \epsilon \to 0.$$  

To identify the limiting measure let $h$ be the solution to the Poisson equation,
\begin{eqnarray*}
h&=& {1\over 2}\;\sum_{i=1}^n   \partial_i F(H)   \Lo_0^{-1}\Big( \omega(K, X_{H_i})\Big),
\end{eqnarray*}
where $\Lo_0^{-1}$ is considered to act on the angle variable only
and $\int_{M_a}  h =0$ for each $a$. For any smooth function $F$ on $\R^n$, we have 

\begin{equs}[2]
&F\big(\hat H^\epsilon(t))\big)-F(\hat H^\epsilon(0))\\
&=
-\epsilon \sum_{i=1}^n\int_0^{{t\over \epsilon^2} \wedge S^\epsilon} \partial_i F(H(y_s^\epsilon)) \omega(K, X_{H_i})(y_s^\epsilon)ds\\
&= \epsilon\sum_{j=1}^n \int_0^{{t\over \epsilon^2} \wedge S^\epsilon} \LL_{X_{H_j}} h(y_s^\epsilon) dB_s^{j}
+\epsilon^2\int_0^{{t\over \epsilon^2} \wedge S^\epsilon} \LL_Kh(y_s^\epsilon)ds
+\epsilon\Big (h(y_0)-h(\hat y_t^\epsilon)\Big).
\end{equs}
The first term on the right hand side is a martingale and the last term
converges to zero as $\epsilon \to 0$.
We  first identify $\LL_Kh$ in terms of the function $F$. By assumption the functions $\omega(K, X_{H_i})$ are centred  and $\Lo_0^{-1}$ has no effect on functions of $H$ and so
\begin{eqnarray*}
\LL_Kh&=&{1\over 2}\LL_K \Lo_0^{-1}\Big( \sum_{i=1}^n\partial_i F(H) \;\omega(K, X_{H_i})\Big)\\
&=&{1\over 2}\LL_K \Big( \sum_{i=1}^n(\partial_i F)(H) \;\Lo_0^{-1}\big(\omega(K, X_{H_i})\big)
\Big)\\
&=&-{1\over 2}\sum_{i=1}^n \partial_j \partial_i F(H)\omega(K,X_{H_j})\Lo_0^{-1}\Big(\omega(K, X_{H_i})\Big)\\
&&+{1\over 2} \Big( \sum_{i=1}^n\partial_i F(H) \LL_K\Lo_0^{-1}( \omega(X_{H_i},K)) \Big).
\end{eqnarray*}
Set
$$\bar \Lo=-{1\over 2}\sum_{i,j}\omega(K,X_{H_j})\Lo_0^{-1}\Big(\omega(K, X_{H_i})\Big) \partial _i \partial_j+{1\over 2} \sum_{i=1}^n \LL_K\Lo_0^{-1}( \omega(K, X_{H_i}))\partial_i,  $$
to see 
\begin{eqnarray*}
&&F\big(\hat H^\epsilon(t))\big)-F(\hat H^\epsilon(0))\\
&=&  \epsilon\sum_{j=1}^n \int_0^{{t\over \epsilon^2}\wedge S^\epsilon}  \LL_{X_{H_j}} h(y^\epsilon_s) dB_s^{j}
+\epsilon^2\int_0^{{t\over \epsilon^2}\wedge S^\epsilon} \bar \Lo F \circ H(y_s^\epsilon)ds
+\epsilon\Big (h(y_0)-h(\hat y_t^\epsilon)\Big).
\end{eqnarray*}
Mimicking Papanicolaou-Stroock-Varadhan, we define  $\F^H_s\equiv \sigma\{\hat H^\epsilon_{r\wedge S^\epsilon}: r\le s\}$ and so $\{\F_s^H: s\ge 0\}$ is the filtration generated by $\hat H^\epsilon_{\cdot\wedge S^\epsilon}$. We need the following estimates: 
\begin{eqnarray*}
&&\epsilon^2\int_{a\wedge T^\epsilon}^{{t\over \epsilon^2}\wedge S^\epsilon}  \bar \Lo F(  H(y_s^\epsilon))ds\\
&=&\int_{a\wedge T^\epsilon}^{t\wedge T^\epsilon} \Big(\int_{M_{\hat H^\epsilon (s)}} \bar \Lo F \circ  H(z)\; d\mu_{\hat H^\epsilon (s)}(z)\Big) ds+ \delta(\bar \Lo F, \epsilon^2, t-a)\\
&=&\int_{a\wedge T^\epsilon}^{t\wedge T^\epsilon}   \Lo F\circ \hat H^\epsilon(s)\; ds+ \delta(\bar \Lo F\circ  H, \epsilon^2, t-a),
\end{eqnarray*}
where in the action-angle local coordinate, 
\begin{eqnarray*}
 \Lo  F(a)&=&-{1\over 2}\sum_{i,j=1}^n\partial_j \partial_i F(a)\;\int_{M_a} \omega(K,X_{H_j})\Lo_0^{-1}\Big(\omega(K, X_{H_i})\Big)(a,z) \; d\mu_I(z)  \\
&&+{1\over 2}\sum_{i=1}^n\partial_i F(a) \int_{M_a} \LL_K\Lo_0^{-1}( \omega(X_{H_i}),K)(a, z)  \, d\mu(z).
\end{eqnarray*}
Then for  any $\F^H_s$- measurable $L^2$ random function $G$, using again Lemma \ref{lemma-2},
\begin{eqnarray*}
&&\E G\; \Big[ F\big(\hat H^\epsilon(t\wedge S^\epsilon))\big)-F\big(\hat H^\epsilon(s\wedge S^\epsilon))\big)
-\int_{s\wedge S^\epsilon}^{t\wedge S^\epsilon} \int_{M_{\hat H^\epsilon (r)}} (\bar \Lo F)(z)d\mu_{\hat H^\epsilon (r)}(z) dr\Big]\\
&=&\E G \Big[  \epsilon\sum_{j=1}^n \int_{{s\over \epsilon^2}\wedge S^\epsilon} ^{{t\over \epsilon^2}\wedge S^\epsilon} \LL_{X_{H_j}} h(y_s^\epsilon) dB_s^{j} \Big]\\
&&+\E G \; \Big[ \delta(\bar \Lo F, \epsilon^2, t-s)  +\epsilon\Big (h(y_{{s\over \epsilon^2}\wedge S^\epsilon} ^\epsilon)-h(y_{{t\over \epsilon^2}\wedge S^\epsilon}^\epsilon)\Big)\Big]\\
&=&\E G \; \Big[ \delta(\bar \Lo F, \epsilon^2, t-s)+\epsilon\Big (h(y_{{s\over \epsilon^2}\wedge S^\epsilon}^\epsilon)-h(y_{{t\over \epsilon^2}\wedge S^\epsilon}^\epsilon)\Big)\Big]\to 0.
\end{eqnarray*}
Consequently $$\E \Big\{ F\big(\hat H^\epsilon(t\wedge S^\epsilon))\big)-F\big(\hat H^\epsilon(s\wedge S^\epsilon))\big)
-\int_{s\wedge S^\epsilon}^{t\wedge S^\epsilon} \int_{M_{\hat H^\epsilon (r)}} (\bar \Lo F)(z)d\mu_{\hat H^\epsilon (r)}(z) dr  \big | \F_s^H\Big\}
\to 0,$$
and so any weak limit of the law $\hat H^\epsilon_\cdot$ is the solution to the  martingale problem for the second order differential operator $\Lo$.

\end{proof}

\bigskip

{\bf Acknowledgement.} 
{This research has benefitted from inspiring discussions with Martin Hairer. I would also like to thank R. Hudson and the referees for critical reading and comments. }

\end{document}